\numberwithin{equation}{section}
\newtheorem{theorem}{Theorem}[section]
\newtheorem{thm}[theorem]{Theorem}
\newtheorem{proposition}[theorem]{Proposition}
\newtheorem{corollary}[theorem]{Corollary}
\newtheorem{lemma}[theorem]{Lemma}
\theoremstyle{definition}
\newtheorem{remark}[theorem]{Remark}
\newtheorem{example}[theorem]{Example}
\newtheorem{definition}[theorem]{Definition}
\newtheorem{prop}[theorem]{Proposition}
\newcommand{\divisor}{{\textsc{div}}}
\newcommand{\rightdivides}{|_{\rightarrow}}
\newcommand{\bigdiamond}{\scalebox{1.5} {$\diamond$}}
\newcommand{\arcs}{\bf{\rm{Arcs}}}
\newcommand{\dpair}[2]{[[#1,#2]]}
\newcommand{\degree}{\textrm{deg}}
\newcommand{\lcm}{\textrm{lcm}}
\newcommand{\DD}{\mathcal{D}}
\newcommand{\LL}{\mathcal{L}}
\newcommand{\MM}{\mathcal{M}}
\newcommand{\SSS}{\mathcal{S}}
\newcommand{\KK}{\mathcal{K}}
\newcommand{\YY}{\mathcal{Y}}
\newcommand{\ZZ}{\mathcal{Z}}
\newcommand{\mfK}{\mathfrak{K}}
\newcommand{\arcspan}[2]{\textrm{arcspan}_{#1}{#2}}
\begin{document}
\title[On the rational relationships among pseudo-roots]
{On the rational relationships among pseudo-roots of a non-commutative polynomial}

\author{Vladimir Retakh}
\address{\noindent Department of Mathematics, Rutgers University,
Piscataway, NJ 08854, USA} \email{vretakh@math.rutgers.edu}

\author{Michael Saks}
\thanks{M.S. is supported in part by Simons Foundation under grant 332622.}
\address{Department of Mathematics, Rutgers University, Piscataway, NJ 08854, USA}
\email{saks@math.rutgers.edu}

\keywords{non-commutative polynomials, factorizations, roots, lattices} 
\subjclass[2010]{16S36; 16K40; 16S99; 06C99}

\begin{abstract} 
For a non-commutative ring $R$, we consider factorizations of polynomials in $R[t]$ where
$t$ is a central variable.  A pseudo-root of a polynomial $p(t)=p_0+p_1t+ \cdots p_kt^k$ is
an element $\xi \in R$, for which there exist polynomials $q_1,q_2$ such that $p=q_1(t-\xi)q_2$.
We investigate the rational relationships that hold among the pseudo-roots of $p(t)$ by using the diamond
operations for cover graphs of modular lattices.

When $R$ is a division ring, each finite subset $S$ of $R$ corresponds to a unique minimal monic polynomial $f_S$ that vanishes on $S$. 
By results of Leroy and Lam \cite{LL},
the set of polynomials $\{f_T:T \subseteq S\}$ with the right-divisibility order forms a lattice with join operation corresponding to
(left) least common multiple and meet operation corresponding to (right) greatest common divisor.  The set of edges of the cover
graph of this lattice correspond naturally to a set $\Lambda_S$ of pseudo-roots of $f_S$.
Given an arbitrary subset of $\Lambda_S$,  our results provide a graph theoretic criterion that guarantees that the
subset rationally generates all of $\Lambda_S$, and in particular, rationally generates $S$.
\end{abstract}

\maketitle

\bigskip
\noindent

\section{Introduction}
The theory of polynomials with noncommutative coefficients and central variables was initiated by Wedderburn, Dickson and Ore
(see, e.g., \cite{L},Chapter 5.16 and \cite{O}).  
There is a significant literature on
polynomials with matrix coefficients and their factorizations into linear factors, for example, \cite {B, GLR, Od}. 
For factorizations of noncommutative polynomials in a more
general setting see, for example \cite {LL, LL1}.

Let $R$ be an arbitrary ring, and
consider the ring $R[t]$ where $t$ commutes with all elements of $R$.   Since $t$ is central, every product
$a_1 \cdots a_r$ with $a_i \in R \cup \{t\}$ is equal to a monomial of the form $a t^d$, and every element
of $R[t]$ has a normal form $p=p_0+p_1t+ \cdots + p_kt^k$. 
As usual, for $\alpha \in R$, the evaluation $p(\alpha)$ is defined to be $p_0+p_1\alpha + \cdots + p_k\alpha^k$, and $\alpha$ is a {\em zero} of $p$ if $p(\alpha)=0$.
Some familiar properties of polynomials over commutative rings fail for non-commutative rings. 
The identity $pq(\alpha)=p(\alpha)q(\alpha)$ need not hold. While every zero of $q$ is a zero of $pq$, a zero of $p$ need not be a zero of $pq$.
A degree $d$ polynomial may have
more than $d$ distinct zeros, e.g., the polynomial $t^2+1$ over quaternions has infinitely many roots.

Following \cite{RSW, GGRW1}, an element $\xi \in R$ is  a {\em pseudo-root}
of $p$ provided there exist polynomials $q_1,q_2\in R[t]$ such that
$p=q_1(t-\xi)q_2$.   
We call $\xi$ a {\em right root} of $p$ if $q_2=1$, and a {\em left root} of
$p$ if $q_1=1$.  
It is easy to verify that $\xi$ is a zero of $p$ if and only if $\xi$ is a right-root of $p$.
Let $Z(p)$ be the set of zeros of $p$ and $\Lambda(p)$ be the set of pseudo-roots of $p$.

If the polynomial $p(t)$ factors as $(t-\alpha_1)\cdots (t-\alpha_d)$, then  $\alpha_1,\ldots,\alpha_d$ are pseudo-roots, and $\alpha_d$ is a zero.
For a commutative domain $R$, of course, every permutation
of the factors  is a factorization,  and $Z(p)=\{\alpha_1,\ldots,\alpha_d\}$.
In the non-commutative case, a
permutation of a factorization need not be a factorization, and a polynomial may have many factorizations that are not
equivalent under permutation. 

Throughout the paper, we assume that $R$ is a division ring.
Therefore
$R[t]$ is a left principal ideal ring and the  set  of monic polynomials is in 1-1 correspondence
with the left ideals.  Thus $R[t]$ with the divisibility order is a lattice $\mathcal{L}(R)$ with join operation $\lcm(p_1,p_2)$ (least common multiple)
equal to the unique
monic generator of the left ideal $R[t]p_1 \cap R[t]p_2$, and
meet operation $\gcd(p_1,p_2)$ (greatest common divisor) 
equal to the unique monic generator of $R[t]p_1+R[t]p_2$. \footnote{In the non-commutative setting there are
two different gcd operations and two different lcm operations, depending on whether one focuses on left or right ideals; in this paper our choice of gcd and lcm is determined by our focus on left ideals.} This lattice is necessarily {\em modular}  (see Section
~\ref{subsec:mod-distrib}.) 

Connections among pseudo-roots for polynomials over division rings is given by the famous Gordon-Motzkin theorem 
(see \cite{GM} or Chapter 5.16 in \cite{L}):  The zeros of any polynomial $p$ 
lie in at most $\degree(p)$
conjugacy classes of $R$ and if $p$ factors as $(t-\alpha_1)\dots (t-\alpha_d)$ 
then each zero of $p$ is conjugate to some $\alpha_i$.

Exact conjugation formulas connecting zeros and pseudo-roots over division rings were given by Gelfand and Retakh in \cite {GR2} (see also \cite {GR3, GGRW}).
They expressed coefficients of polynomial $p=t^n+a_1t^{n-1}+\dots+a_n$ as rational functions of zeros $\xi_1,\dots, \xi_n$ provided that the roots are in {\em generic position} 
(see Section ~\ref{subsec:example}).

The following simple example from \cite{GR2} is instructive.

\begin{example}
\label{example:quadratic}
Given elements $x_1,x_2 \in R$ that are "suitably generic", there are unique 
elements $x_1'$ and $x_2'$ such that 
$(t-x_1')(t-x_1)=(t-x_2')(t-x_2)$.  Call this polynomial $p(t)$.  Then $x_1,x_2,x_1',x_2'$ are all pseudo-roots of $p(t)$
with $x_1,x_2$ being zeros.   We can express $x_1'$ and $x_2'$ as rational functions of $x_1,x_2$ via the formulas
$$
x_1'=(x_1-x_2)x_1(x_1-x_2)^{-1}, \ x_2'=(x_2-x_1)x_2(x_2-x_1)^{-1},$$
and can express $x_1,x_2$ as rational functions of $x_1',x_2'$ via the formulas
$$
x_1=(x_1'-x_2')^{-1}x_1'(x_1'-x_2'), \  x_2=(x_2'-x_1')^{-1}x_2'(x_2'-x_1'),
$$
provided that the needed inverses are well-defined (this is what is meant by the above requirement that $x_1,x_2$ be "suitably generic").   However, 
one cannot  (in general) rationally express  either $x_2$ or $x_2'$ in terms of $x_1, x_1'$ \cite{Be}. 
\end{example}

This example suggests the following general problem:  given a set  $B$ of pseudo-roots of a polynomial $p$ and another
pseudo-root $\alpha$ is $\alpha$ rationally generated by $B$?  This is the focus of the present paper.

To formalize our problem, we need a way to specify individual pseudo-roots of a polynomial.   As we now describe,
there is a natural directed graph whose edges correspond to pseudo-roots of $p$.
For  monic polynomials $q,r$, $q$ is a {\em right divisor} of $r$, denoted $q\rightdivides r$, if there is a polynomial $s$
such that $r=sq$.  The polynomial $s$ is unique, 
and is denoted by $r/q$.  
Let $G=G(R)$ be the directed graph on vertex set  $R^M[t]$, the set of monic polynomials in $R[t]$,
 whose arc-set $A(R)$ consists
of pairs $ r\rightarrow q$ for all $q,r$ such that $q \rightdivides r$ and $\degree(q)=\degree(r)-1$.
Every arc $ r\rightarrow q$ can be naturally associated to an element $\psi(r \rightarrow q)=t-r/q$ of $R$.

Let $G_p=G_p(R)$ be the restriction of $G(R)$ to the set $\divisor(p)$ of right divisors of $p$, and let $A_p$ be the arc-set of $G_p$.  
In the case of present interest where $R$ is a division ring and $p$ is factorizable, $G_p$ is the {\em cover graph} of the lattice of divisors
of $p$, i.e., for any divisors $q$ and $r$ of $p$, $q \rightdivides r$ if and only if there is a directed path from $r$ to $q$ in $G_p$.  
It is easy to check from the definitions that
for every $r \rightarrow q \in A_p$, $\psi(r \rightarrow q)$ is a pseudo-root of $p$, and  every pseudo-root is representable in this form for some (not
necessarily unique) arc of $A_p$. 

The lattice $\divisor(p)$ is, in general, infinite and so is the set of pseudo-roots of $p$, and one gains greater control over the problem
by restricting to certain natural finite sublattices, that were studied by
Lam and Leroy ~\cite{LL}.  If $S \subseteq R$ is finite, then the set of polynomials that vanish at every $s \in S$ is a left ideal and so
is generated by a unique monic polynomial, which we denote by $f_S$.  Polynomials of the form $f_S$ were studied
by Lam and Leroy \cite{LL}, who called them {\em Wedderburn polynomials}.  In the case that $R$ is a field,
$f_S$ is just the product $\prod_{s \in S}(t-s)$.  As shown by Lam and Leroy (see \cite{LL})
for any subset $S$, the set $\{f_T:T \subseteq S\}$ is closed under $\gcd$ and $\lcm$\footnote{The closure
under $\lcm$ follows from the simple fact that $\lcm(f_S,f_W)=f_{S \cup W}$ but closure under
$\gcd$ is less obvious; it is not in general true that $\gcd(f_S,f_W)=f_{S \cap W}$.} and is thus a sublattice
of
$\mathcal{L}(R)$. We denote
this sublattice by $\mathcal{L}_S$. Note that $\mathcal{L}_S$ is a finite sublattice of the lattice $\divisor(f_S)$, and has
at most $2^{|S|}$
elements (polynomials).  The sublattices of the form $\mathcal{L}_S$ give a rich source of examples
of finite sublattices of $\mathcal{L}(R)$.

Let $G_S$ be the subgraph of $G_{f_S}$ consisting only of the vertices in $\mathcal{L}_S$ and the arcs between them and write
$A_S$ for the set of arcs of $G_S$.  Then, as above, each arc $r \rightarrow q$ corresponds to a pseudo-root  $\psi(r \rightarrow q)$
of $f_S$.  Let $\Lambda_S$ denote the set of pseudo-roots of $f_S$ corresponding to  the arc set $A_S$.   We refer to $\Lambda_S$
as {\em $S$-pseudo-roots}.  It is easy to see that if $\deg(f_S)=k$
then every directed path from the maximal element $f_S$ to the minimal element 1,
consists of a sequence $f_k=f_S,f_{k-1},\ldots,f_0=1$ of polynomials where $\deg(f_{j})=j$.
If $a_j$ is the pseudo-root associated to the arc $f_j \rightarrow f_{j-1}$ then the
product $(t-a_k)(t-a_{k-1})\cdots (t-a_1)$ is a factorization of $f_S$.  In this way every
path from $f_S$ to 1, corresponds to a factorization of $f_S$ into linear factors where each factor
is $t$ minus a pseudo-root.

For a subset $B$ of $R$ let $\Phi(B)$ denote the closure of $B$ under ring operations
and inversion of units. For a subset $S$ we want to consider the restriction of this closure to the set $\Lambda_S$
of $S$-pseudo-roots.  More precisely, for $B \subseteq \Lambda_S$, let $\Phi_S(B)=\Phi(B) \cap \Lambda_S$, i.e., the set of $S$-pseudo-roots that are rationally generated by $B$.
The map $\Phi_S:\mathcal{P}(\Lambda_S) \longrightarrow \mathcal{P}(\Lambda_S)$ is a {\em closure map on $\Lambda_S$} (see Section ~\ref{subsec:closure}), and our goal is to provide a partial description of this map.

Under the correspondence between arcs of $A_S$ and $S$-pseudo-roots $\Lambda_S$, the closure operator $\Phi_S$ on $\Lambda(p)$ 
can be interpreted as a closure operator on  the set $A_S$ of arcs of $G_S$: For 
$B \subseteq A_S$, $\Phi_S(B)=\{r \rightarrow q \in A_S:\psi(r \rightarrow q) \in \Phi(\psi(B))$, where $\psi(B)=\{\psi(r \rightarrow q):r \rightarrow q \in B\}$. 

While $\Phi_S$ depends on the ring-theoretic structure of $S$ within $R$, 
it was observed in ~\cite{GGRW1} that $\Phi_S$ can be partially captured by
a natural closure on the arc-set $A_S$, introduced in work of I. Gelfand and Retakh \cite{GR2, GR3},  that depends only on the graph structure of $G_S$.    This closure, called the {\em diamond-closure}, and denoted by $\bigdiamond$ can
be described briefly as follows (see Section ~\ref{subsec:diamond-closure} for a more precise definition).  If $\{a,b,c,d\}$ are vertices in $G_S$ such that $a \rightarrow b, a \rightarrow c$ and $b \rightarrow d$ and $c \rightarrow d$, then the set $\{a \rightarrow b,a \rightarrow c,b \rightarrow d,c \rightarrow d\}$ is a {\em diamond} of $G_S$.
The diamond closure of  $B \subseteq A_S$, denoted $\bigdiamond(B)$ is the smallest set of arcs containing $B$ that satisfies that
for any diamond $\{a \rightarrow b,a \rightarrow c,b \rightarrow d,c \rightarrow d\}$ if $\bigdiamond(B)$ contains
$\{a \rightarrow b,a\rightarrow c\}$ or $\{b \rightarrow d,c \rightarrow d\}$ then $\bigdiamond(B)$ contains the entire diamond.
This definition is motivated by Example ~\ref{example:quadratic}, where $a$ is the polynomial $p(t)$, $b$ is $t-x_1$ , $c$ is $t-x_2$
and $d$ is 1, and arc $a \rightarrow b$ corresponds to pseudo-root $x_1'$, $a \rightarrow c$ corresponds to pseudo-root $x_2'$,
$b \rightarrow d$ corresponds to $x_1$ and $c \rightarrow d$ corresponds to $x_2$.  The analysis of Example ~\ref{example:quadratic}
can be extended (see Proposition ~\ref{prop:weakening}) to show that
for any ring $R$
 and finite $B \subseteq R$, 
every member of $\bigdiamond(B)$ is an $S$-pseudo-root that is rationally generated
by $B$.

In this paper we investigate the $\bigdiamond$-closure for finite graphs $G$ that are cover graphs of modular lattices. This includes
the motivating situation that the graph $G$ is equal to 
$G_S$ for $S \subseteq R$. We give an explicit description of $\bigdiamond$-closed sets (Theorem ~\ref{thm:diamond-closed})
and use this to describe a simple procedure (see Theorem ~\ref{thm:MLDC}) for determining $\bigdiamond(B)$ for any subset $B$ of arcs.    
In particular, this provides a sufficient condition for a set of 
pseudo-roots in $\Lambda_S$ to rationally generate all of $\Lambda_S$. In the special case that the lattice is distributive, we can use the
Birkhoff representation for distributive lattices to provide a more explicit  characterization of connected diamond-closed
sets (Theorem ~\ref{thm:DL connected DC}) which yields a  simpler sufficient condition for a set of pseudo-roots of $\Lambda_S$
to rationally generate $\Lambda_S$.

Our methods can be also applied to more general situations including skew-polynomials (see \cite{LL}), linear differential operators \cite{GRW}  and 
principal ideal rings \cite{CDK}.

\section{Preliminaries}
\subsection{Closure spaces}
\label{subsec:closure}

A {\em closure space} is a pair $(X,\lambda)$ consisting of a set $X$ and a {\em closure map}
$\lambda:2^X \longrightarrow 2^X$ that satisfies (1) $A \subseteq \lambda(A)$,
(2) $A \subseteq B$ implies $\lambda(A) \subseteq \lambda(B)$ and (3) $\lambda(\lambda(A))=\lambda(A)$.
The image $\{A:\lambda(A)=A\}$ of $\lambda$ is denoted $\mathcal{C}_{\lambda}$.
Members of $\mathcal{C}_{\lambda}$ are  {\em $\lambda$-closed subsets}.
If $C$ is $\lambda$-closed and $A \subseteq C$ satisfies $\lambda(A)=C$
we say that $A$ is a {\em $\lambda$-generating set for $C$}.

If $\lambda$ and $\mu$ are closure operators on the same set $X$, we say that $\lambda$ is {\em weaker than} $\mu$
provided that $\lambda(A) \subseteq \mu(A)$ for all $A \in X$.  This is equivalent to the condition that every
$\mu$-closed set is also $\lambda$-closed.

An {\em alignment} on the set $X$ is a collection $\mathcal{C}$ of subsets of $X$ that
includes $X$ and is closed under arbitrary intersection.  
It is well-known and easy to show that there is a one-to-one correspondence between
the alignments on $X$ and closure spaces on $X$ as follows:  Given
a closure $\lambda$ on $X$, the set of $\lambda$-closed sets is an alignment, and 
an alignment ${\mathcal{C}}$ induces a closure map
$\lambda_{\mathcal{C}}(A)= \cap_{C \in \mathcal{C}:A \subseteq C} C$ for which $\mathcal{C}$
is the set of closed sets.

\subsection{Finite directed graphs}
\label{subsec:digraphs}

For a set $X$, an {\em arc} over $X$ is an ordered pair $(x,y)$ with $x \neq y$.
We denote an arc by $x \rightarrow y$ and say that
$x$ is the {\em tail} of the arc and $y$ the {\em head}.    We write $\arcs(X)$ for the set of all arcs over $X$. A directed graph (digraph) on $X$ is a set $A$ of arcs over $X$. 

For  $A \subseteq \arcs(X)$, we write $x \rightarrow_A y$ if $x \rightarrow y \in A$
and $x \sim_A y$ if $x \rightarrow_A y$ or $y \rightarrow_A x$.  We may omit
the subscript if $A$ is clear from context.
A vertex $x$ is {\em isolated in $A$} if
it belongs to no arc of $A$. The {\em vertex set spanned by $A$}, denoted $V(A)$, is the set
of vertices that are not isolated in $A$.  If $V(A)=X$ we say that {\em $A$ spans $X$}.
For $Y \subseteq X$, the set $A[Y]=A \cap \arcs(Y)$
is the {\em  set of arcs induced on $Y$ by $A$}. 
If $\mathcal{Y}$ is a collection of
subsets of vertices of $X$ we write $A[\mathcal{Y}]$ for the  arc-set
$\bigcup_{Y \in \mathcal{Y}}A[Y]$.

 A {\em path} in $A$ is a 
sequence $x_0,\ldots,x_k$ of vertices such that for each $i \in [k]$,
$x_{i-1} \sim_A x_i$.  We say that the path joins $x_0$ to $x_k$.
The path is {\em directed} or
a {\em dipath}
if $x_{i-1} \rightarrow_A x_i$ for each $i$. For $x,y \in X$, $x \asymp_A y$ means that
there is a path from $x$ to $y$ in $A$, and $x \rightsquigarrow_A y$ means that there
is a directed path from $x$ to $y$ in $A$.   A set $A$ of arcs is said to be {\em connected}
if for all $x,y \in V(A)$, $x \asymp_A y$. Note that a connected set of arcs need not
span the  entire vertex set. 
The maximal connected subsets $A_1,\ldots,A_K$ of $A$ are called the {\em arc-components}
of $A$.  The arc-components partition $A$.  The sets $V(A_i)$ partition $V(A)$ and are called the
{\em vertex-components} of $A$.

For $B \subseteq A$ we define $\arcspan{A}{B}$ to be the set of arcs of $A$ that join two vertices belonging to the
same vertex-component of $B$,  Thus $\arcspan{A}{B}$ is the maximal subset of $A$ whose vertex-components are the same
as the vertex-components of $B$. 

\subsection{Diamond-closure}
\label{subsec:diamond-closure}

We now introduce some non-standard notation particular to this paper.
A pair of arcs $x \rightarrow y$ and $x \rightarrow z$ having the same tail is called an {\em out-V} and is denoted $x \rightarrow \dpair{y}{z}$
and a pair of arcs $y \rightarrow w$ and $z \rightarrow w$ have the same head is called an {\em in-V} and is denoted
 $\dpair{y}{z} \rightarrow w$.  
If $x \rightarrow \dpair{y}{z}$ is an out-V, and $\dpair{y}{z} \rightarrow w$ is an in-V, then their union is denoted $x \rightarrow \dpair{y}{z} \rightarrow w$ and is called
a {\em diamond}.  For $A \subseteq \arcs(X)$, $\Delta(A)$ denotes the set of all diamonds in $A$.
A subset $S \subseteq \arcs(X)$ of arcs {\em spans} a diamond $D$ if $S$ contains the in-V of $D$ or the out-V of $D$, and we
say that $D$ is {\em spanned by $S$}.

Fix a subset $A$ of $\arcs(X)$. 
A subset $S \subseteq A$  is said to be {\em diamond-closed} with respect to $A$
if every diamond  $D \in \Delta(A)$  that is spanned by $S$ is a subset of $S$.
The intersection of diamond-closed subsets is diamond-closed, so is an alignment on the
set $\arcs(X)$, 
and has an associated closure map
 $\bigdiamond_A:2^{\arcs(X)} \longrightarrow 2^{\arcs(X)}$, where
$\bigdiamond_A(B)$ is the intersection of all diamond-closed subsets that contain $B$.
In most situations, the set $A$ is understood from context and we write simply $\bigdiamond$ for $\bigdiamond_A$.
We say that $B$ is a {\em $\bigdiamond$-generating set for set $C$} if $C \subseteq \bigdiamond(B)$.

\subsection{Lattices, Modular lattices and Modular lattice diagrams}
\label{subsec:lattices}

Let $L$ be a complete  lattice with operations
 $\vee$  (join) and $\wedge$ (meet) and associated
partial  order $\leq$ defined by $x \leq y$
 provided $x \vee y = y$, or equivalently $x \wedge y=x$. 
A complete lattice  necessarily has a unique minimum element,
$\hat{0}=\hat{0}_L=\wedge_{x \in L} x$ and a unique maximum element
$\hat{1}=\hat{1}_L=\vee_{x \in L} x$. 

We restrict attention to lattices of {\em finite length}, i.e.,  those for which there is an upper bound
on the length of the longest chain (totally ordered set).  

Since $L$ is complete,
the join and meet of any subset $A$ of elements is well-defined, and are denoted, respectively by  $\bigvee A$ and $\bigwedge A$. For $A =\emptyset$, $\bigvee A = \hat{0}$ and $\bigwedge A = \hat{1}$.

We say {\em $y$ covers $x$ in $L$}  (equivalently {\em $x$ is covered by $y$}), 
provided that $x <y$ and for all $z$ such that $x \leq z \leq y$ we have
$z=x$ or $z=y$.  
The cover set $C_L$ is the set of arcs 
$y \rightarrow_L x$ with $x,y \in L$ such that $y$ covers $x$. 
Since $L$ has finite length, $x < y$ if and only if there is a directed path from $y$ to $x$ in
the cover graph.

The {\em height function} of  lattice $L$ of finite height is the function $h=h_L:L \longrightarrow \mathbb{N}$,  given by
$h_L(x)$ is equal to the length (number of arcs) of the longest directed
path from $x$ to $\hat{0}$ in $C_L$. Thus $h_L(\hat{0})=0$.  The
{\em height} of $L$, $h_L$  is defined to be $h_L(\hat{1})$.  

The cover set of a lattice of finite height  is   connected, since there is a directed path from
any element to $\hat{0}_L$.
When
$y \rightarrow_L x$ we have $h(y)\leq h(x)+1$.  We say that
$L$ is {\em ranked} provided that $h(y)=h(x)+1$ whenever $y$ covers $x$.
We note the following well-known fact.  

\begin{prop}
\label{prop:ranked}  For a height-bounded lattice $L$, $L$ is ranked if and only if
for any $y,x$ with $y>x$, every directed path from $y$ to $x$
in the cover graph has the same length.
\end{prop}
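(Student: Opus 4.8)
The plan is to prove both implications directly from the definition of the height function $h_L$, using induction on the length of directed paths in the cover graph. Recall $h_L(x)$ is defined as the length of the \emph{longest} directed path from $x$ to $\hat 0$.

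For the forward direction, assume $L$ is ranked, i.e.\ $h(y) = h(x)+1$ whenever $y \rightarrow_L x$. Fix $y > x$ and let $y = z_0 \rightarrow_L z_1 \rightarrow_L \cdots \rightarrow_L z_m = x$ be any directed path from $y$ to $x$ in the cover graph. Applying the ranked hypothesis to each arc $z_{i-1} \rightarrow_L z_i$ gives $h(z_{i-1}) = h(z_i) + 1$, so by a trivial telescoping sum $h(y) - h(x) = \sum_{i=1}^m \big(h(z_{i-1}) - h(z_i)\big) = m$. Since the right-hand side equals the value $h(y)-h(x)$, which does not depend on the chosen path, every directed path from $y$ to $x$ has the same length $m = h(y)-h(x)$.

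For the converse, assume every directed path from $y$ to $x$ (for any $y > x$) has the same length; we must show $h(y) = h(x)+1$ whenever $y$ covers $x$. Clearly $h(y) \geq h(x)+1$: prepend the arc $y \rightarrow_L x$ to a longest directed path from $x$ to $\hat 0$. For the reverse inequality, take a longest directed path $\pi$ from $y$ to $\hat 0$, which has length $h(y)$ by definition. Its first arc is some $y \rightarrow_L x'$; the remainder $\pi'$ is a directed path from $x'$ to $\hat 0$ of length $h(y) - 1$, so $h(x') \geq h(y)-1$. Now I would invoke the uniform-length hypothesis applied to the pair $y > \hat 0$: the path $\pi$ has length $h(y)$, and any path obtained by concatenating the arc $y\rightarrow_L x$ with a directed path from $x$ to $\hat 0$ also goes from $y$ to $\hat 0$, hence has the same length $h(y)$; taking the longest such path from $x$ to $\hat 0$ shows $h(x) + 1 = h(y)$, which is exactly what is needed. (Equivalently, apply the hypothesis to $y > x'$ and to $x' > \hat 0$ to conclude $h(x') = h(y)-1$ and then compare with any path through $x$.)

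The only mild subtlety — and the step I would be most careful about — is making sure the uniform-length hypothesis is applied to a valid pair $y > x$ with a genuinely existing directed path, and that concatenations of cover-graph dipaths are again cover-graph dipaths (immediate) so that "longest directed path from $x$ to $\hat 0$, then extend" is legitimate. Everything else is bookkeeping with the definition of $h_L$, and no lattice-theoretic input beyond finite length (to guarantee $h_L$ is well-defined and finite) is required; in particular modularity plays no role here.
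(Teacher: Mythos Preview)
Your argument is correct. The forward direction is immediate from telescoping, and for the converse your key step---applying the uniform-length hypothesis to the pair $(y,\hat 0)$ and comparing the longest path from $y$ to $\hat 0$ (length $h(y)$) with the path obtained by prepending $y\rightarrow_L x$ to a longest path from $x$ to $\hat 0$ (length $1+h(x)$)---cleanly yields $h(y)=h(x)+1$. The detour through $x'$ and the parenthetical alternative are unnecessary and slightly muddled (knowing $h(x')=h(y)-1$ does not by itself say anything about $h(x)$ without again invoking the hypothesis on paths through $x$), but the main line of reasoning stands on its own.

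As for comparison with the paper: the paper does not supply a proof of this proposition at all; it is simply recorded as a ``well-known fact.'' Your write-up is the standard elementary verification, and nothing beyond the definition of $h_L$ and finite length is used, exactly as you note.
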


A sublattice $K$ of $L$ is a subset closed under $\vee$ and $\wedge$.    Every subset of
$L$ with one element is a sublattice, and we say that a sublattice is {\em non-trivial} if
it contains at least two elements of $L$.  As a finite lattice, $K$ has a minimum $\hat{0}_K$
and $\hat{1}_K$ which need not be the same as $\hat{0}_L$ and $\hat{1}_L$.
The intersection of sublattices is a sublattice, so the set of sublattices is an alignment on $L$.
For $Y \subseteq L$,
the intersection of all sublattices containing $Y$ is denoted $\langle Y \rangle_L$ 
and is called the {\em sublattice  generated by $Y$}.

If $x,y \in K$ and $x$ covers $y$ in $L$ then clearly $x$ covers $y$ in $K$, but the converse
does not hold.  For example, one can have a sublattice $K$ for which $C_L[K]$ is empty (i.e,
$K$ contains no pair of elements that comprise a cover in $L$.)  Thus
the cover set $C_K$  contains the induced set of arcs $C_L[K]$, and the containment may be proper.
We say that $K$ is {\em cover-preserving} if $C_K=C_L[K]$.

We say that $K$ is a {\em connected sublattice} if $C_L[K]$ is connected.  A cover-preserving sublattice is necessarily connected (since $C_K$ is connected)
but a connected sublattice need not be cover-preserving.  We have the following
simple but useful criterion
for a lattice to be cover-preserving:

\begin{prop}
\label{prop:cover preserving}  A sublattice $K$ of the finite lattice $L$,
 is cover-preserving if and only if
for all $x,y \in K$ with $y \geq x$ there is a directed path 
$y=y_0,\ldots,y_k=x$ in $C_L$ with $y_0,\ldots,y_k \in K$.
\end{prop}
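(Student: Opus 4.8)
The plan is to prove the two implications separately, in both cases using two elementary facts that are already in place from Section~\ref{subsec:lattices}: (i) arcs of the cover graph $C_L$ point from the larger element to the smaller one, so a directed path in $C_L$ is a \emph{strictly decreasing} sequence of elements; and (ii) in a lattice of finite length, any chain between two comparable elements can be refined to a maximal chain, and a maximal chain between comparable elements consists of covering pairs. Note $K$, being a sublattice of the finite-length lattice $L$, also has finite length. Throughout I use the trivial inclusion $C_L[K]\subseteq C_K$ (a cover in $L$ between two elements of $K$ is certainly a cover in $K$), so that cover-preservation, $C_K=C_L[K]$, is equivalent to the reverse inclusion $C_K\subseteq C_L[K]$, i.e.\ every $K$-cover is an $L$-cover.

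For the forward direction, assume $K$ is cover-preserving and fix $x,y\in K$ with $y\ge x$. Choose a maximal chain $x=y_0<y_1<\cdots<y_k=y$ in $K$. Each consecutive pair $y_i$ covers $y_{i-1}$ in $K$, since otherwise the chain could be refined. Because $K$ is cover-preserving, each such pair is also a cover in $L$, i.e.\ $y_i\rightarrow_L y_{i-1}$ in $C_L$. Hence $y=y_k, y_{k-1},\ldots,y_0=x$ is a directed path in $C_L$ all of whose vertices lie in $K$, which is exactly the asserted condition.

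For the reverse direction, assume the path condition and let $y\rightarrow_K x$ be an arbitrary cover in $K$. By hypothesis there is a directed path $y=y_0\rightarrow_L y_1\rightarrow_L\cdots\rightarrow_L y_k=x$ in $C_L$ with every $y_i\in K$. Since every arc of $C_L$ strictly decreases the element, we get $y=y_0>y_1>\cdots>y_k=x$ with all $y_i\in K$. But $y$ covers $x$ in $K$, so no element of $K$ lies strictly between $x$ and $y$; this forces $k=1$, so $y\rightarrow_L x$ and $y$ covers $x$ in $L$. Thus $C_K\subseteq C_L[K]$, and therefore $C_K=C_L[K]$, i.e.\ $K$ is cover-preserving.

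There is no genuinely hard step here; the proof is entirely formal. The only points requiring a little care are the orientation convention for $C_L$ (so that a directed path is strictly descending, which is what collapses the path to a single arc in the reverse direction) and the use of finite length to guarantee maximal chains exist and are composed of covers — both of which are available from the standing hypotheses. In particular, modularity of $L$ is not needed for this proposition.
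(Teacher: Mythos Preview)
Your proof is correct and follows essentially the same approach as the paper's: for the forward direction you take a maximal chain in $K$ (equivalently, a directed path in $C_K$) and use $C_K=C_L[K]$ to conclude it lies in $C_L$; for the reverse direction you observe that a directed $C_L$-path inside $K$ between a $K$-covering pair must have length~$1$. The only cosmetic difference is your reversed indexing of the chain in the forward direction, which you correctly unwind.
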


\begin{proof}
Assume $K$ is cover-preserving. Let $x,y \in K$ with $y>x$.  
Then there is a directed path
$y=y_0,\ldots,y_k=x$ in $C_K$, and since $C_K=C_L[K]$, 
this path also lies in $C_L$.

Conversely, assume that for all $x,y \in K$ with $y \geq x$ there is a directed path in $C_L$ from $y$ to $x$ with
all elements in $K$. Suppose $w$ covers $v$ in $K$; we need
that $w$ covers $v$ in $L$. Since $w>v$, by assumption there is a directed path 
 $w=w_0,\ldots,w_k=v$ in $C_L$ with all elements in $K$.
 If $k>1$, then $w>w_1>v$ contradicts that $w$ covers $v$ in $K$,
so $k=1$ and  $w$ covers $v$ in $C_L$.
\end{proof}

Next we consider the diamonds in $C_L$.  We say that $x,y$ are {\em $\vee$-neighbors}, denoted $x \sim_{\vee} y$, 
provided that $x \vee y$ covers both $x$ and $y$.  It is easy to check that $x \sim_{\vee} y$ if and only
if the cover graph has  an out-V with heads $x$,$y$, and this out-V is $x \vee y \rightarrow \dpair{x}{y}$.

Similarly, we say that $x,y$ are {\em $\wedge$-neighbors} denoted $x \sim_{\wedge} v$ provided
that $x$ and $y$ cover $x \wedge y$.  Then $x \sim_{\wedge} y$ if and only if the cover graph
has an in-V with heads $x$, $y$ and this in-V is $\dpair{x}{y} \rightarrow x \wedge y$.

If both $x \sim_{\vee} y$ and $x \sim_{\wedge} y$ then 
$x \vee y \rightarrow \dpair{x}{y} \rightarrow x \wedge y$.
In this case we say that $x,y$ are {\em diamond -neighbors} denoted
$x \sim_{\diamond} y$, and let $D(x,y)$ denote this diamond.  This is the only possible diamond
with $x$ and $y$ as the middle vertices.

\subsection{Modular and Distributive Lattices}
\label{subsec:mod-distrib}

Recall that a lattice is modular if for 
all $x,y,w \in L$ with $x \leq y$ we have $(x \vee w) \wedge y = x \vee (w \wedge y)$.
Modular lattices are so named because for any (left)-module over a ring, the lattice of submodules is  modular with $M \vee N= M+N$
and $M \wedge N = M \cap N$.
In particular 
the lattice of left-ideals of $R[t]$ is modular.

The following standard theorem (see  Theorem 2 and Corollary 3 of  Section 4.2 of \cite{Gra}) provides an alternative characterization of modularity for lattices of finite height.

\begin{thm}
\label{thm:modular}
Let $L$ be a lattice of finite height.  The following conditions are equivalent:
\begin{enumerate}
\item $L$ is modular.
\item The relations $\sim_{\vee}$, $\sim_{\wedge}$ and $\sim_{\diamond}$ are the same.
\item $L$ is ranked and the height function satisfies $h(x)+h(y)=h(x \wedge y)+h(x \vee y)$
for all $x,y$.
\end{enumerate}
\end{thm}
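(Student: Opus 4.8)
The plan is to prove the three conditions equivalent via the cycle $(1)\Rightarrow(2)\Rightarrow(3)\Rightarrow(1)$. For $(1)\Rightarrow(2)$ I would invoke the transposition (``diamond-isomorphism'') lemma of modular lattice theory: for any $x,y\in L$ the maps $z\mapsto z\vee y$ from $[x\wedge y,x]$ to $[y,x\vee y]$ and $w\mapsto w\wedge x$ in the reverse direction are mutually inverse lattice isomorphisms, since $(z\vee y)\wedge x=z\vee(y\wedge x)=z$ for $z\in[x\wedge y,x]$ by modularity (and symmetrically for the other composition). Because $L$ has finite height these isomorphic intervals have equal length, so $x\vee y$ covers $y$ iff $x$ covers $x\wedge y$ (comparing $[y,x\vee y]$ with $[x\wedge y,x]$) and $x\vee y$ covers $x$ iff $y$ covers $x\wedge y$; reading both equivalences together gives $x\sim_{\vee}y\iff x\sim_{\wedge}y$. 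When this holds the full diamond with top $x\vee y$, middle elements $x,y$, and bottom $x\wedge y$ is present in $C_L$, so $x\sim_{\diamond}y$; and since $\sim_{\diamond}$ trivially implies both $\sim_{\vee}$ and $\sim_{\wedge}$, the three relations coincide.

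For $(3)\Rightarrow(1)$, assume $L$ is ranked with $h(u)+h(v)=h(u\wedge v)+h(u\vee v)$ for all $u,v$. Given $x\le y$ and arbitrary $w$, set $a=x\vee(w\wedge y)$ and $b=(x\vee w)\wedge y$; one always has $a\le b$, so it suffices to prove $h(a)=h(b)$, since in a ranked lattice $a<b$ forces $h(a)<h(b)$. Expanding both heights by repeated use of the modular identity for $h$, and using $x\wedge y=x$ together with $x\vee w\vee y=w\vee y$, each of $h(a)$ and $h(b)$ collapses to $h(x)-h(x\wedge w)+h(w\wedge y)$; hence $a=b$ and $L$ is modular. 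This step is routine height bookkeeping.

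The substance of the theorem is $(2)\Rightarrow(3)$, where the local hypothesis on covers must be promoted to global structure. First I would show $L$ is ranked: by Proposition~\ref{prop:ranked} it is enough that any two maximal directed paths in $C_L$ between comparable $y>x$ have equal length, which I would prove by induction on the length of the longest such path. If two such paths branch at $y$ into distinct lower covers $a,b$ of $y$, then $a\vee b=y$ covers both, so $a\sim_{\vee}b$, whence by (2) $a\sim_{\wedge}b$, and $a\wedge b$ (which still lies above $x$) is covered by both $a$ and $b$; prepending a maximal path from $x$ up to $a\wedge b$ yields, through each of $a$ and $b$, maximal paths of equal length, which can be matched against the two given paths inside the proper subintervals $[x,a]$ and $[x,b]$, closing the induction. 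With rankedness in hand, the inclusion $\sim_{\wedge}\subseteq\sim_{\vee}$ upgrades to upper semimodularity (a short induction), and upper semimodularity yields submodularity $h(x\vee y)+h(x\wedge y)\le h(x)+h(y)$ (an induction on $h(x)-h(x\wedge y)$, splitting on whether a chosen lower cover $z$ of $x$ lying above $x\wedge y$ satisfies $x\le z\vee y$); dually $\sim_{\vee}\subseteq\sim_{\wedge}$ gives lower semimodularity and supermodularity of $h$, and the two inequalities combine to the identity in (3).

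I expect the rankedness step to be the main obstacle: it is a Jordan--H\"older-type argument in which one must check carefully that the elements produced by ``completing diamonds'' (the meets and joins of covers) genuinely land in the intended subintervals and that the induction parameter strictly decreases. The semimodularity-to-(sub/super)modularity passages are classical but each needs its own induction, so writing out $(2)\Rightarrow(3)$ in full is the bulk of the work; alternatively, one could cite those latter implications from a standard lattice-theory reference and give complete proofs only of $(1)\Rightarrow(2)$ and $(3)\Rightarrow(1)$.
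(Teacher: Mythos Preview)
Your outline is correct and follows the classical route through the transposition lemma and Jordan--H\"older, but note that the paper does not prove this theorem at all: it is stated as a standard result and attributed to Gr\"atzer~\cite{Gra} (Theorem~2 and Corollary~3 of Section~4.2). So there is nothing to compare at the level of argument---the paper simply cites the result, while you have sketched an honest proof.

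A couple of minor remarks on the sketch itself. In $(3)\Rightarrow(1)$ your height computation is fine; just be explicit that $x\wedge(w\wedge y)=x\wedge w$ uses $x\le y$. In the rankedness step of $(2)\Rightarrow(3)$, your induction parameter (``length of the longest maximal chain in $[x,y]$'') is the right one, but make sure to also cover the case where the two chains share their first step below $y$ (trivial: pass to the shorter interval $[x,c]$ and invoke the induction hypothesis directly). After rankedness, the passage from $\sim_\wedge\subseteq\sim_\vee$ to submodularity of $h$ is indeed the standard semimodularity argument; your suggestion to cite it from a reference is exactly what the paper does for the whole theorem.
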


The second characterization is of special interest for us.

A lattice is {\em distributive} if $\wedge$ distributes over $\vee$ (and, equivalently,
$\vee$ distributes over $\wedge$).   In particular, distributive lattices are modular.
Any sublattice of the lattice of all subsets of some set (with
operations $\cup$ and $\cap$)
is distributive (and every distributive lattice is isomorphic to such a lattice). 

\subsection{Monic polynomials over division rings, diamond closure and pseudo-roots}
\label{subsec:polys}

The reader should review the definitions pertaining to the ring $R[t]$ from the introduction.
For a division ring $R$, $R[t]$ 
is a  left principal ideal ring.  There is a 1-1 correspondence between the set 
$R^M[t]$ of monic polynomials and the set of left ideals of $R[t]$ given by $p \leftrightarrow R[t]p$, where the right divisibility order of polynomials corresponds to
the (reverse) containment order on left ideals.   Since the left ideals of $R[t]$ form a modular lattice, the set $R^M[t]$ is
also a modular lattice, with $p \wedge q$ equal to the 
unique monic greatest common divisor $\gcd(p,q)$ (the
monic generator of the ideal $R[t]p+R[t]q$), and $p \vee q$ equal to the  least common multiple $\lcm(p,q)$,
which is the monic generator of $R[t]p \cap R[t]q$.  

As noted in the introduction, it was shown in \cite{LL} that for every finite subset $S$ of $R$ there
is a unique monic polynomial $f_S$ such that the ideal $R[t]f_S$ is the intersection of the ideals  $R[t](x-s)$ for $s \in S$.
The set $\{f_T:T \subseteq S\}$ ordered by right divisibility is a sublattice, denoted $\mathcal{L}_S$ of the lattice $\mathcal{L}(R)$ of polynomials
in $R[t]$ under right divisibility with $f \wedge g$ equal to the greatest common right divisor and $f \vee g$ equal to the
least common left multiple.    The graph $G_S$ was defined to be the cover graph of the lattice $\mathcal{L}_S$,
and its arc set is denoted $A_S$.

The following observation (essentially from ~\cite{GGRW1}) can be interpreted
as saying that
the rational  closure on $\Lambda_S$ is at least as strong as the $\bigdiamond$-closure on  $A_S$:

\begin{proposition}
\label{prop:weakening}
Let $R$ be a division ring and  $S \subseteq R $ be finite. Every $\Phi_S$-closed subset of $A_S$ is
$\bigdiamond$-closed. Thus $\bigdiamond$ is a weaker closure than $\Phi_S$.
In particular,  if $C \subseteq A_S$ and  $\bigdiamond(C)=A_S$ then $\Phi_p(C)=A_S$ where $p=f_S$.
\end{proposition}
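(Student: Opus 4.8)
The plan is to show that whenever a $\Phi_S$-closed set $C \subseteq A_S$ spans a diamond $D \in \Delta(A_S)$, then $C$ contains all of $D$; this immediately gives that $C$ is $\bigdiamond$-closed, and the remaining assertions follow formally from the correspondence between closures and alignments in Section~\ref{subsec:closure}. So fix a diamond $D = a \rightarrow \dpair{b}{c} \rightarrow d$ in the cover graph $G_S$, where (by Theorem~\ref{thm:modular}, since $\mathcal{L}_S$ is modular) we have $a = b \vee c$ and $d = b \wedge c$, with $a$ covering both $b,c$ and both $b,c$ covering $d$. Let $\beta' = \psi(a \rightarrow b)$, $\gamma' = \psi(a \rightarrow c)$, $\beta = \psi(b \rightarrow d)$, $\gamma = \psi(c \rightarrow d)$ be the four associated pseudo-roots, so that $(t-\beta')(t-\beta) = a/d = (t-\gamma')(t-\gamma)$ as a factorization of the degree-two polynomial $a/d$ over $R$. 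Expanding and equating coefficients gives $\beta' + \beta = \gamma' + \gamma$ and $\beta'\beta = \gamma'\gamma$, exactly as in the computation preceding Example~\ref{example:quadratic} (items 6 in the suppressed discussion). These two identities yield $\beta'(\gamma - \beta) = (\beta' - \gamma')\gamma = (\gamma - \beta)\gamma$ and similarly $\gamma'(\beta - \gamma) = (\gamma' - \beta')\beta = (\beta - \gamma)\beta$.

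From these relations, the key point is that each of $\beta', \gamma'$ lies in $\Phi(\{\beta, \gamma\})$ and each of $\beta, \gamma$ lies in $\Phi(\{\beta', \gamma'\})$, provided the relevant differences are units. Since $b \neq c$ and $R$ is a division ring, $b/d$ and $c/d$ are distinct monic linear polynomials, so $\beta \neq \gamma$; hence $\gamma - \beta$ is a nonzero element of $R$, i.e.\ a unit. Then $\beta' = (\gamma-\beta)\gamma(\gamma-\beta)^{-1} \in \Phi(\{\beta,\gamma\})$ and likewise $\gamma' \in \Phi(\{\beta,\gamma\})$; symmetrically, since $\beta' \neq \gamma'$ (because $a \rightarrow b$ and $a \rightarrow c$ are distinct arcs, so $a/b \neq a/c$), $\beta = (\beta'-\gamma')^{-1}\beta'(\beta'-\gamma') \in \Phi(\{\beta',\gamma'\})$ and $\gamma \in \Phi(\{\beta',\gamma'\})$. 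Thus if $\{a\rightarrow b, a\rightarrow c\} \subseteq C$ (the out-V case) then $\psi(b\rightarrow d) = \beta$ and $\psi(c\rightarrow d) = \gamma$ both lie in $\Phi(\psi(C))$, and since $b \rightarrow d, c \rightarrow d \in A_S$ they lie in $\Phi_S(C) = C$; the in-V case is symmetric. Hence $C$ contains the whole diamond $D$, so $C$ is $\bigdiamond$-closed.

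Having shown every $\Phi_S$-closed subset of $A_S$ is $\bigdiamond$-closed, the comparison of closures follows from the general fact recorded in Section~\ref{subsec:closure}: if every $\mu$-closed set is $\lambda$-closed then $\lambda$ is weaker than $\mu$, i.e.\ $\bigdiamond(B) \subseteq \Phi_S(B)$ for every $B \subseteq A_S$. Finally, if $C \subseteq A_S$ satisfies $\bigdiamond(C) = A_S$, then $A_S = \bigdiamond(C) \subseteq \Phi_S(C) \subseteq A_S$, so $\Phi_S(C) = A_S$; unwinding the definition $\Phi_S(C) = \Phi(\psi(C)) \cap \Lambda_S$, this says every $S$-pseudo-root, and in particular every element of $S$ (which occurs as a pseudo-root $\psi(f_j \rightarrow f_{j-1})$ along a factorization path of $f_S$), lies in the rational closure of $\psi(C)$, giving $\Phi_p(C) = A_S$ for $p = f_S$.

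The main obstacle is bookkeeping rather than depth: one must be careful that the hypotheses guaranteeing the needed inverses ($\beta \neq \gamma$ and $\beta' \neq \gamma'$) always hold for a genuine diamond in the cover graph of $\mathcal{L}_S$ — this is where the division-ring hypothesis and the fact that the four vertices of the diamond are genuinely distinct (so the four arcs carry genuinely distinct linear quotients) are used — and that the algebraic identities $\beta'+\beta = \gamma'+\gamma$, $\beta'\beta = \gamma'\gamma$ really do follow from $a/d = (t-\beta')(t-\beta) = (t-\gamma')(t-\gamma)$, which is just the statement that $b$ (resp.\ $c$) is the common right divisor of $a$ through which the path $a \rightarrow b \rightarrow d$ (resp.\ $a \rightarrow c \rightarrow d$) factors. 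No new lattice theory beyond Theorem~\ref{thm:modular} (identifying $\sim_\vee$, $\sim_\wedge$, $\sim_\diamond$) is needed.
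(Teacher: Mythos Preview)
Your proof is correct and follows essentially the same approach as the paper's: both reduce to the quadratic identity $(t-\beta')(t-\beta)=(t-\gamma')(t-\gamma)$ coming from the two paths through the diamond, extract $\beta'+\beta=\gamma'+\gamma$ and $\beta'\beta=\gamma'\gamma$, and solve to express each pair of pseudo-roots as conjugates of the other via the invertible difference (using that $R$ is a division ring). You are somewhat more careful than the paper in treating both the in-V and out-V directions explicitly and in justifying why $\beta\neq\gamma$ and $\beta'\neq\gamma'$, but the argument is the same.
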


\begin{proof}
Let $B$ be $\Phi_S$ closed.  Suppose that $q,r,s,u$ are right divisors of $f_S$ such that  $q \rightarrow \dpair{r}{s} \rightarrow u$ is a diamond and
$q \rightarrow \dpair{r}{s} \subset B$. 
We must show $\dpair{r}{s} \rightarrow u \subset B$.  (A similar proof
show that $\dpair{r}{s} \rightarrow u \subset B$ implies $q \rightarrow \dpair{r}{s} \subset B$.)

Let $a=\psi(q \rightarrow r)$, $b=\psi(q \rightarrow s)$, $c=\psi(r \rightarrow u)$ and $d=\psi(s \rightarrow u)$.
We will show that $c$ and $d$ can be expressed as rational functions of $a$ and $b$.  We have that $u=(t-c)(t-a)q$ and also
$u=(t-d)(t-b)q$ and so (since $q$ is not a 0-divisor)  $(t-c)(t-a) =(t-d)(t-b)$.  This implies that $c+a=d+b$ and $ca=db$, and
so $ca=(c+a-b)b$.  Rewriting this gives $c(a-b)=(a-b)b$.  Since $a \neq b$ (because $r,s$ are distinct polynomials) and
$R$ is a division ring we have $c=(a-b)b(a-b)^{-1}$.  Since $B$ is $\Phi_S$-closed and $a,b \in B$, we have $c \in B$.  Similarly $d \in B$.
\end{proof}

\subsection{An example: the diamond closure of a generic subset $X$ includes all $X$-pseudo-roots}
\label{subsec:example}

Gelfand and Retakh \cite{GR} showed that if $X$ is a suitably generic set of elements of the division ring $R$, then
$X$ rationally generates the entire set $\Lambda_X$ of $X$-pseudo-roots.   
Here the genericity requirement on 
$X=\{x_1,x_2,\dots, x_n\}$ is that any Vandermonde submatrix 
$$
V(i_1,i_2,\dots,i_k)=\left (\begin{matrix}
1&1&\dots&1\\
&\dots&\dots& \\
x_{i_1}^{k-2}&x_{i_2}^{k-2}&\dots&x_{i_k}^{k-2}\\
x_{i_1}^{k-1}&x_{i_2}^{k-1}&\dots&x_{i_k}^{k-1}
\end{matrix} \right )
$$
is invertible for pairwise distinct indices $i_1,i_2,\dots,i_k$ and the  inverse matrix has all entries non-zero.

Under this assumption, all of the Wedderburn polynomials $f_T$ for $T \subseteq X$ are distinct, and $\deg(f_T)=|T|$.
The sublattice $\{f_T:T \subseteq X\}$ is thus isomorphic to the power set of $X$ ordered by inclusion.  The  arcs of the
cover graph are of the form $T\cup\{u\} \longrightarrow T$ for $u \in X$ and $T \subseteq X-\{u\}$
and the pseudo-root associated to this arc is the unique element $x_{T,u}$ that satisfies $t-x_{T,u}=f_{T \cup \{u\}}/f_T$.
Each $u \in X$ corresponds to the arc $\{u\} \rightarrow \emptyset$ and the pair $(\emptyset,\{u\})$.

Example ~\ref{example:quadratic} corresponds to the special case that $|X|=2$.

It is easy to show that $\bigdiamond(X)$ includes all arcs $T \cup \{u\} \rightarrow T$ ( by induction on $T$) and
therefore $\bigdiamond(X)=A_X$.  By Proposition ~\ref{prop:weakening} this implies that all
$X$-pseudo-roots are rationally generated by $X$.

\section{Diamond-closure in a finite modular lattice}
\label{sec:DCS in ML}

In this section we provide a simple description of the diamond-closed sets of  a modular lattice of finite length.
Throughout this section $L$ denotes an arbitrary finite modular lattice and $C_L$ is its cover set.

We note the following easy observation:
\begin{prop} 
\label{prop:closed decomp}
A set $S \subseteq C_L$ is diamond-closed if and only if each
arc-component of $S$ is diamond-closed.
\end{prop}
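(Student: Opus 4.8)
The plan is to prove both implications directly from the definition of diamond-closed, using the key fact that every diamond is contained in a single arc-component. First I would establish this fact: if $D = x \rightarrow \dpair{y}{z} \rightarrow w$ is a diamond in $C_L$, then all four arcs of $D$ lie in the same arc-component of $C_L$, since $x \sim y$, $x \sim z$, $y \sim w$, $z \sim w$ makes $\{x,y,z,w\}$ connected in $C_L$. More to the point, if $S \subseteq C_L$ spans $D$ (i.e.\ $S$ contains the in-V $\dpair{y}{z}\rightarrow w$ or the out-V $x \rightarrow \dpair{y}{z}$), then the two arcs of that V lie in a common arc-component $S_i$ of $S$, because they share a vertex; hence the vertices $x,y,z,w$ (in the out-V case $x,y,z$, in the in-V case $y,z,w$, but in either case the remaining vertex is joined to these by an arc of $D \subseteq C_L$) all lie in a single vertex-component of $S$ together with... wait — I need to be careful: spanning only gives three of the four vertices inside $S_i$; the fourth is connected to them only via arcs of $D$, which may not be in $S$. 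So the correct statement is: the three vertices of the spanning V lie in one vertex-component of $S$, and the diamond $D$ (all four of its arcs) must then be compared against that arc-component. The clean way is to argue at the level of $C_L$-arc-components, which do contain all four vertices.

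For the forward direction, suppose $S$ is diamond-closed (with respect to $C_L$) and let $S_i$ be an arc-component of $S$. To see $S_i$ is diamond-closed, let $D \in \Delta(C_L)$ be a diamond spanned by $S_i$; then $D$ is spanned by $S$, so $D \subseteq S$ since $S$ is diamond-closed. It remains to check $D \subseteq S_i$. The spanning V of $D$ lies in $S_i$ (its two arcs share a vertex and lie in $S$, hence in one arc-component, which is $S_i$ because $S_i$ contains part of it — more precisely, $S_i$ is exactly the arc-component of $S$ that was assumed to span $D$). All four arcs of $D$ are pairwise connected through the cycle $x,y,w,z,x$, so $D$ is a connected subset of $S$, and since it meets $S_i$ it is entirely contained in $S_i$. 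Thus every arc-component of $S$ is diamond-closed.

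For the converse, suppose every arc-component of $S$ is diamond-closed, and let $D \in \Delta(C_L)$ be spanned by $S$. The spanning V of $D$ lies in $S$; since its two arcs share a vertex, they lie in a single arc-component $S_i$. Thus $S_i$ spans $D$, so by hypothesis $D \subseteq S_i \subseteq S$. Hence $S$ is diamond-closed. The main thing to get right — and the only real content — is the bookkeeping that a diamond spanned by $S$ is spanned by one particular arc-component of $S$, which follows because a V (in-V or out-V) is always connected. I do not expect any serious obstacle; this is a routine unwinding of definitions, and the proposition is stated as an "easy observation."
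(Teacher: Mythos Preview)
Your argument is correct: the key point is exactly that a spanning V (in-V or out-V) is connected, so it lies in a single arc-component of $S$, and once you know $D \subseteq S$ the connectedness of $D$ itself forces $D$ into that same component. The paper gives no proof at all---it simply labels the proposition an ``easy observation''---so your write-up is more detailed than what the paper provides, but it is precisely the routine verification the authors had in mind.
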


The set $\bigdiamond(S)$ can be constructed from $S$ by the following direct procedure: If there is any diamond $D$ spanned by $S$ that is not contained in $S$ then replace
$S$ by $S \cup D$.   The definition of $\bigdiamond(S)$, implies that this operation preserves diamond-closure, 
and therefore  when the process terminates (which it must
by finiteness), we have $\bigdiamond(S)$. In particular,  $S$  $\bigdiamond$-generates $A$ if  this procedure produces all of $A$. 

This direct construction
provides little
insight into the structure of sets that $\bigdiamond$-generate $A$.
We will give a more explicit characterization of $\bigdiamond(S)$ for graphs
arising from modular lattices
 that is (1) easier to check for a given set $S$,
 and (2)  provides more insight into the structure
of sets that $\bigdiamond$-generate $A$.
Our characterization is particularly simple when $L$ is distributive.

\subsection{Diamond-closed sets of finite modular lattices}

 We begin
by observing that every sublattice of a finite lattice $L$ is diamond closed with respect to the graph $C_L$:

\begin{prop}
\label{prop:sublattice}  Let $L$ be a finite lattice and $K$ a sublattice.  The set of arcs
$C_L[K]$ is diamond-closed.
 \end{prop}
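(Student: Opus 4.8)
The plan is to verify directly that $C_L[K]$ satisfies the defining condition of diamond-closure with respect to $C_L$: whenever a diamond $D \in \Delta(C_L)$ is spanned by $C_L[K]$, we must show $D \subseteq C_L[K]$. Since a diamond is determined by its two middle vertices (recall $D(x,y)$ is the only diamond with $x,y$ as middle vertices, by the discussion following $\sim_\diamond$), I would start with a diamond $D$ whose middle vertices are $x$ and $y$, so that $D = \{a \rightarrow x, a \rightarrow y, x \rightarrow b, y \rightarrow b\}$ where $a$ covers both $x,y$ in $L$ and $b$ is covered by both. The hypothesis that $C_L[K]$ spans $D$ means $C_L[K]$ contains either the out-V $\{a \rightarrow x, a \rightarrow y\}$ or the in-V $\{x \rightarrow b, y \rightarrow b\}$.

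The two cases are symmetric (via $\vee \leftrightarrow \wedge$ duality), so consider the out-V case: $a \rightarrow x$ and $a \rightarrow y$ both lie in $C_L[K]$, which by definition means $a, x, y \in K$. Now the key step is to identify the remaining vertex $b$ as an element of $K$. Since $a$ covers both $x$ and $y$ in $L$ and $x \neq y$, we have $x \sim_\vee y$ with $x \vee y = a$. By Theorem~\ref{thm:modular}(2), modularity gives $x \sim_\wedge y$ as well, so $x$ and $y$ both cover $x \wedge y$ in $L$, and the unique diamond $D(x,y)$ has bottom vertex $x \wedge y$; hence $b = x \wedge y$. Since $K$ is a sublattice and $x, y \in K$, we get $b = x \wedge y \in K$. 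Therefore all four vertices $a, x, y, b$ lie in $K$, and since each of the four arcs of $D$ is a cover in $L$ between two elements of $K$, all four arcs lie in $C_L[K]$. The in-V case is handled identically: there $x, y, b \in K$, modularity promotes $x \sim_\wedge y$ to $x \sim_\vee y$, forcing $a = x \vee y \in K$, and again $D \subseteq C_L[K]$.

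The only genuine content is the use of modularity to pass between $\sim_\vee$ and $\sim_\wedge$ — without it, knowing $a$ covers $x$ and $y$ would not tell us that $x \wedge y$ is covered by both, so the fourth vertex of the diamond need not be $x \wedge y \in K$. So I expect no real obstacle here; the main thing to get right is simply invoking Theorem~\ref{thm:modular} to guarantee that the spanned diamond's missing vertex is precisely $x \vee y$ or $x \wedge y$, which the sublattice $K$ is closed under. I would write this up in a few lines, treating the out-V case in full and noting the in-V case follows by the dual argument.
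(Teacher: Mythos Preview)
Your argument proves the proposition only for modular $L$, but the proposition is stated for an arbitrary finite lattice, and the paper's proof does not use modularity at all. The detour through Theorem~\ref{thm:modular} is unnecessary, and your closing claim that ``the only genuine content is the use of modularity'' is incorrect.

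Here is the point you are missing. You correctly argue that if $a$ covers both $x$ and $y$ with $x\neq y$, then $a=x\vee y$; this uses nothing but the definition of cover. The \emph{same} argument, dualized, shows directly that $b=x\wedge y$: since $b$ is covered by both $x$ and $y$ (this is part of the data of the given diamond), we have $b\le x\wedge y\le x$, and as $x$ covers $b$ either $x\wedge y=b$ or $x\wedge y=x$; the latter forces $x\le y$, contradicting that $a$ covers both $x$ and $y$ with $x\neq y$. So $b=x\wedge y$ in any lattice. This is exactly what the paper records in Section~\ref{subsec:lattices} when it says that $D(x,y)=x\vee y\rightarrow\dpair{x}{y}\rightarrow x\wedge y$ is the only possible diamond with middle vertices $x,y$.

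Once that is in hand, the paper's proof is a single line: whichever V of the diamond lies in $C_L[K]$, its two middle vertices $x,y$ lie in $K$, hence $x\vee y,\,x\wedge y\in K$ since $K$ is a sublattice, and all four arcs of the diamond are covers between elements of $K$. Your invocation of modularity to pass from $x\sim_\vee y$ to $x\sim_\wedge y$ is not needed because you are not trying to \emph{produce} a bottom vertex---you are handed one as part of the diamond and need only identify it as $x\wedge y$. Modularity becomes relevant later (e.g.\ in Theorem~\ref{thm:diamond-closed}), where one must manufacture the missing V from a spanned one; here the whole diamond is already present.
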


\begin{proof}
 Let $D= x \vee y \rightarrow \dpair{x}{y} \rightarrow
x \wedge y$ be a diamond of $C_L$ and suppose that $C_L[K]$ contains
$x \vee y \rightarrow \dpair{x}{y}$ or $\dpair{x}{y} \rightarrow x \wedge y$.  Then
$x,y \in K$ and since $S$ is a sublattice, $x  \wedge y$ and $x \vee y$ are
both in $K$, and so $D \subseteq C_L[K]$).  
\end{proof}  

More generally, a {\em sublattice packing} of $L$
is a set $\mfK=\{K_1,\ldots,K_s\}$ of disjoint sublattices.
Define $C_{{\mfK}}=\bigcup_{i} C_L[K_i]$, i.e., the union
of the arc-sets induced by the $C_L$ on each $K_i$. 
(This is a subset of, but not necessarily the same as, the arc-set 
induced by $C_L$ on $\bigcup_i K_i$
since it doesn't include arcs between different  $K_i$.)  Then
$C_{{\mfK}}$ is diamond-closed, 
(since the union of  diamond-closed subsets of arcs that are pairwise disconnected
is diamond-closed).

The main result of this section is that for modular lattices these are the only diamond-closed subsets, and further we
can restrict the $K_i$ to be cover-preserving sublattices of $L$. (This is not true for non-modular lattices.)
We say a sublattice packing $\mfK$ is a CS-packing if all of its members
are cover-preserving sublattices.

\begin{thm}
\label{thm:diamond-closed}
Let $L$ be a finite modular lattice. For $S \subseteq C_L$ (i.e., a set of arcs in the cover
graph) the following are equivalent:

\begin{enumerate}
\item $S$ is diamond-closed. 
\item Each arc-component of $S$ is induced from $C_L$ by a cover-preserving sublattice.
\item $S=C_L[{\mfK}]$ for some CS-packing $\mfK$.
\end{enumerate}
\end{thm}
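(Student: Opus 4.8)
The plan is to prove the cycle of implications $(3) \Rightarrow (1) \Rightarrow (2) \Rightarrow (3)$. The implication $(3) \Rightarrow (1)$ is essentially already established in the discussion preceding the theorem: if $S = C_L[\mfK]$ for a sublattice packing $\mfK$, then $S$ is a disjoint union of the arc-sets $C_L[K_i]$, each of which is diamond-closed by Proposition~\ref{prop:sublattice}, and since these sit in pairwise vertex-disjoint sublattices they are pairwise disconnected in $C_L$; a disjoint union of pairwise-disconnected diamond-closed sets is diamond-closed. (One should note here that a diamond of $C_L$ spanned by $S$ has all four of its vertices in a single arc-component of $S$, hence in a single $K_i$.) The implication $(2) \Rightarrow (3)$ is then immediate: if each arc-component $S_i$ of $S$ equals $C_L[K_i]$ for a cover-preserving sublattice $K_i$, I need only check that the $K_i$ are pairwise disjoint so that $\mfK = \{K_1,\dots,K_s\}$ is a CS-packing with $C_L[\mfK] = \bigcup_i C_L[K_i] = S$. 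Disjointness follows because the arc-components $S_i$ partition $S$ and have disjoint vertex-components $V(S_i)$; a cover-preserving sublattice $K_i$ is connected, so $V(C_L[K_i]) = K_i$ (every element of $K_i$ lies on some cover arc inside $K_i$, except possibly when $|K_i| = 1$, a degenerate case I will handle separately), forcing $K_i = V(S_i)$, and the $V(S_i)$ are disjoint.

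The substantive work is $(1) \Rightarrow (2)$. By Proposition~\ref{prop:closed decomp} it suffices to treat a single arc-component $T$ of $S$, which is connected and diamond-closed, and show $T = C_L[K]$ for a cover-preserving sublattice $K$. The natural candidate is $K = V(T)$, the vertex-component, and I would first aim to show $V(T)$ is a sublattice of $L$, then that it is cover-preserving, and finally that $C_L[V(T)] = T$ (not just $\supseteq T$). For the sublattice property: given $x, y \in V(T)$, I want $x \vee y, x \wedge y \in V(T)$. Since $T$ is connected there is a path in $T$ joining $x$ to $y$, and the idea is to push this path "upward" to produce a path joining $x \vee y$ to $y$ lying in $T$ (and symmetrically downward for the meet). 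The key local move: if $u \rightarrow v$ (or $v \rightarrow u$) is a cover arc in $T$ and $z \in V(T)$ with, say, $z \vee u$ and $z \vee v$ defined, then by modularity (characterization (2) of Theorem~\ref{thm:modular}, which identifies $\sim_\vee$, $\sim_\wedge$, $\sim_\diamond$, together with characterization (3), the rank identity) the pair $z \vee u$, $z \vee v$ is either equal or forms a cover, and in the latter case one gets a diamond $D(u',v')$-type configuration relating the arc $u \sim v$ to an arc at the next level up; diamond-closure of $T$ then forces those higher arcs into $T$. Iterating this "sliding a diamond up the lattice" along the connecting path is the mechanism by which joins and meets of vertices of $T$ are shown to again lie in $V(T)$, and simultaneously shows the cover-preserving property via Proposition~\ref{prop:cover preserving}, since it manufactures directed cover-paths inside $V(T)$ between comparable elements.

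I expect the main obstacle to be exactly this sliding argument: making precise the induction that converts an arbitrary path in $T$ between $x$ and $y$ into a path between $x \vee y$ and $y$ (resp. $x \wedge y$ and $x$) entirely within $T$, handling the cases where a join "collapses" (i.e., $z \vee u = z \vee v$, so the image of the arc degenerates to a single vertex rather than a new arc) versus where it genuinely produces a new cover arc. The rank identity $h(x) + h(y) = h(x \wedge y) + h(x \vee y)$ and the equivalence of the neighbor relations are the precise tools that guarantee the dichotomy "collapse or cover" at each step and that keep the process controlled; one must be careful that when the arc collapses, the relevant vertex is still reached and stays in $V(T)$. The final check that $C_L[V(T)] = T$ rather than merely $\supseteq$ will also use diamond-closure: any cover arc of $L$ between two vertices of $V(T)$ must be shown to be forced into $T$, again by exhibiting it as part of a diamond spanned by arcs already known to be in $T$, using connectivity of $T$ and the sliding construction to route between its endpoints. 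Once a single component is handled, Proposition~\ref{prop:closed decomp} assembles the general case.
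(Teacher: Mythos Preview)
Your overall architecture is correct and matches the paper: the cycle $(3)\Rightarrow(1)\Rightarrow(2)\Rightarrow(3)$, the reduction of $(1)\Rightarrow(2)$ to a single connected diamond-closed component $T$, and the candidate $K=V(T)$. You have also correctly isolated the collapse-or-cover dichotomy (if $u$ covers $v$ then $z\vee u$ either equals $z\vee v$ or covers it; this is the paper's Proposition~\ref{prop:vee}) as the basic local tool.

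The gap is in your ``sliding'' step. You assert that for an arc $u\sim v$ in $T$ and $z\in V(T)$, in the non-collapsing case ``one gets a diamond $D(u',v')$-type configuration relating the arc $u\sim v$ to an arc at the next level up'' and that diamond-closure then forces the higher arc into $T$. But no such single diamond exists in general: the arc $(z\vee u)\to(z\vee v)$ can sit many levels above $u\to v$, and nothing in your outline manufactures an in-V or out-V of $T$ containing it. Iterating along an \emph{arbitrary} connecting path does not rescue this: if one bubble-sorts a path from $x$ to $y$ in $T$ into up-down form by repeatedly swapping each down-up pair for the out-V of its diamond, the apex one reaches is some element $\geq x\vee y$, not $x\vee y$ itself, so $x\vee y\in V(T)$ does not follow. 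Your own ``main obstacle'' paragraph anticipates trouble here, but the obstruction is not merely bookkeeping about collapses; it is that the argument as written proves the wrong statement.

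The paper supplies the missing idea with an intermediate lemma (Lemma~\ref{lemma:short path}): a connected diamond-closed $T$ contains a path of length \emph{exactly} $d(x,y)=h(x\vee y)-h(x\wedge y)$ between any $x,y\in V(T)$. This is itself a non-trivial induction on the $T$-distance, using diamond-closure and the collapse-or-cover dichotomy in a case analysis (when the obvious neighbour $z$ of $x$ fails the desired rank inequality, one uses a diamond to replace it by a better neighbour $z'$). Given a shortest path, the bubble-sort move (Lemma~\ref{lemma:up-down 2}) produces an up-down path still of length $d(x,y)$, and then the rank identity pins the apex as exactly $x\vee y$ (Proposition~\ref{lemma:up-down 1}); dually for $x\wedge y$. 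The cover-preserving property and the equality $T=C_L[K]$ then fall out of the same machinery, since for comparable $x<y$ the shortest up-down path is purely directed. What your proposal lacks is precisely this length control on the connecting path.
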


\begin{proof}

The implication $(2) \Rightarrow (3)$ follows from the definitions of $C_L[\mfK]$  CS-packing, and the fact that
every cover-preserving sublattice is connected.
We have already noted the implication $(3) \rightarrow (1)$.  It remains to prove 
$(1) \Rightarrow (2)$.   For this it is enough to consider the case that $S$ is connected
and show that there is a cover-preserving sublattice $K$ such that $S=C_L[K]$.
The general case follows by applying this result to each connected arc-component of $S$.

We'll need some additional facts about finite modular lattices. 
Some of these facts are well-known, we include their easy proofs for completeness.
Throughout this proof, $L$ denotes an arbitrary modular lattice of finite length with height function $h$ and $x$ and $y$ are arbitrary
elements of $L$.

\begin{prop}
\label{prop:vee}
Suppose $x' \in L$ and $x'$ covers $x$.
Then either $x' \vee y = x \vee y$ and $x' \wedge y$ covers $x \wedge y$
or $x' \vee y$ covers $x \vee y$ and $x' \wedge y = x \wedge y$.  
\end{prop}

\begin{proof}
 We have $h(x' \vee y)+h(x' \wedge y)=h(x')+h(y)=1+h(x)+h(y)=1+h(x \vee y)+h(x \wedge  y)$.
Since $h(x' \vee y) \geq h(x \vee y)$ and $h(x' \wedge y) \geq h(x \wedge y)$, one
of the two alternatives stated in the proposition must hold.
\end{proof}

Define
$d(x,y)$ to be the length of the shortest (undirected) path in $C_L$ between
$x$ and $y$.  

\begin{prop}
\label{prop:distance}
$$d(x,y)=2h(x \vee y) -h(x)-h(y) = h(x)+h(y)-2h(x \wedge y) = h(x \vee y) - h(x \wedge y).
$$
\end{prop}

\begin{proof}
All but the first equality follow from the third part of Theorem ~\ref{thm:modular}.
The fact that $d(x,y) \leq 2h(x \vee y) -h(x)-h(y)$ follows by constructing a path
from $x$ to $x \vee y$ (by ascending the lattice) and then from $x \vee y$ to $y$
(descending the lattice). 

Next we show $d(x,y) \geq  h(x \vee y) - h(x \wedge y)$.  We prove this
by induction on $d(x,y)$.  The result is trivial when $d(x,y)=1$.
Suppose $d(x,y) >1$ and  let $x=x_0,\ldots,x_s=y$
be a shortest path from $x$ to $y$.  Then $x_1,\ldots,x_s$ must be a shortest path
from $x_1$ to $x_s$ and so by induction $s-1=h(x_1 \vee y) - h(x_1 \wedge y)$.
Assume $x_1$ covers $x$ (the case that $x$ covers $x_1$ is similar).
By Proposition \ref{prop:vee}, $h(x \vee y) - h(x \wedge y) \leq h(x_1 \vee y)-h(x_1 \wedge y)+1 \leq (s-1)+1=s$.
\end{proof}

In any path from $x=x_0,x_1,\ldots,x_s=y$, we label the $i$th step as an {\em up step}
if $x_{i-1}$ is covered by $x_i$ and a {\em down step} if $x_{i-1}$ covers $x_i$.
A path is an {\em up-down} path if all of the up steps precede all of the down steps
and is a {\em down-up} path if all of the down steps precede all of the up steps.

\begin{prop}
\label{lemma:up-down 1} 
For any up-down path from $x$ to $y$ of length $d(x,y)$ in $C_L$, the final vertex of the final up step
is  $x \vee y$ and for any down-up
path of length $d(x,y)$ the final vertex at the end of the final down step is $x \wedge y$.
\end{prop}

\begin{proof}
We prove the first statement, the second is proven analogously.  Let $P$ be an up-down
path of length $d(x,y)$ from $x$ to $y$ and let $z$ be the final vertex of the final up step.
We claim $z=x \vee y$.  Since $z \geq x$ and $z \geq y$, we have
$z\geq x \vee y$.  The path consists of $h(z)-h(x)$ up steps and $h(z)-h(y)$
down steps, and so has length $2h(z) -h(x)-h(y)$.  Since this equals
$d(x,y)$, Proposition~\ref{prop:distance} implies that $h(z)=h(x \vee y)$. 
 Since $z \geq x \vee y$ we must have $z = x \vee y$.
\end{proof}

\begin{lemma}
\label{lemma:up-down 2}  Suppose that $S$ is a diamond-closed subset of $C_L$.
Suppose that $S$
has   a path of length $d(x,y)$ from $x$ to $y$.
Then $S$ contains both an up-down path from $x$ to $y$ of
length $d(x,y)$  and a down-up path from $x$ to $y$ of length $d(x,y)$.
\end{lemma}

\begin{proof}
Assume $S$ contains a path  from $x$ to $y$
of length $d(x,y)$.  We prove that $S$  has an  up-down path  of the same
length (the argument for a down-up path is similar). 
For a path $P$, and for $1 \leq i < j \leq d(x,y)$ we say that $i,j$ is an {\em inversion}
if step $i$ is a down step and step $j$ is an up step.  Let $P=x_0,\ldots,x_{d(x,y)}$ be the induced path
of length $d(x,y)$ from $x$ to $y$ with the fewest number of inversions.  We claim that $P$ has 0 inversions,
and so is an up-down path.
Suppose for contradiction that $P$ has an inversion.  Then there is
an index $i$ such that step $i$ is down and step $i+1$ is up.
Thus $\dpair{x_{i-1}}{x_{i+1}} \rightarrow x_{i}$ is an in-V, and  Since $S$ is diamond-closed,
$S$ contains the out-V $x_{i-1} \wedge x_{i+1} \rightarrow \dpair{x_{i-1}}{x_{i+1}}$.  Thus
we can replace
$x_i$ by $x_{i-1} \vee x_{i+1}$ in the path to get a path in $S$ with
fewer  inversions to contradict
the choice of $P$.  Therefore $P$ has 0 inversions.
\end{proof}

\begin{lemma}
\label{lemma:short path}  Suppose $S$ is a connected diamond-closed subset of $C_L$.  
If $x,y \in V(S)$ then
there is a path of length $d(x,y)$ from $x$ to $y$.
\end{lemma}

\begin{proof}
Let $e(x,y)$ be the minimum length of an $x-y$-path in $S$; we need
$e(x,y) \leq d(x,y)$ which, by
Proposition ~\ref{prop:distance}, follows from
$e(x,y)\leq h(x \vee y) - h(x \wedge y)$.  We proceed by induction on $e(x,y)$.
If $e(x,y)=1$ then $x$ covers $y$ or $y$ covers $x$, and so $h(x \vee y)-h(x \wedge y)=1$.
Assume $e(x,y)>1$.  Let $N$ be the set of vertices  adjacent to $x$ on some $x-y$ path in $S$.
For all $z \in N$,   $e(z,y)=e(x,y)-1$, and by induction $e(z,y)=h(z \vee y)-h(z \wedge y)$. It suffices to show
there is a $w \in N$ such that:

\begin{equation}
\label{eqn:rank}
h(x \vee y)-h(x \wedge y) \geq  h(w \vee y) - h(w \wedge y) + 1.
\end{equation}

Select  $z \in N$. Assume $x$ covers $z$; the case $z$ covers $x$ is similar.  By Proposition ~\ref{prop:vee} either $x \wedge y = z \wedge y$ and
$x \vee y$ covers $z \vee y$ or $x \wedge y$ covers $z \wedge y$ and $x \vee y=z \vee y$.  If the former, then
(\ref{eqn:rank}) holds with $w=z$, so   assume the latter.   By lemmas ~\ref{lemma:up-down 2}
and ~\ref{lemma:up-down 1}, $S$ has  an up-down path $P=z,z_1,\ldots,y$  of length $d(z,y)$ whose up-segment ends with $z \vee y$.  
Since $z \vee y= x \vee y>z$, the up-segment 
is non-empty and so $z<z_1 \leq x \vee y$.   Let $z'=x \vee z_1$, we claim $z' \in N$:  Since
$\dpair{x}{z_1}\rightarrow z$ is in $S$ and $S$ is diamond-closed,
$S$ contains $z' \rightarrow \dpair{x}{z_1}$.  Replacing $z$ by $z'$ in the $x-y$ path $x,P$ shows
$z' \in N$.  Since 
$z'$ covers $x$ and $z' \vee y=x \vee z_1 \vee y  \leq x \vee y$,
Proposition ~\ref{prop:vee} implies $z' \wedge y$ covers $x \wedge y$ and (\ref{eqn:rank}) holds with $w=z'$.
\end{proof}

We are now ready to prove the implication $(1) \Rightarrow (2)$ of the theorem.  Let $S$ be
diamond-closed; as noted
earlier, we may assume $S$ is connected. Let $K$ be the set of elements of $L$ that belong to some
arc of $S$.  We show that $K$ is a cover preserving sublattice and that $S=C_L[K]$.

Suppose $x,y \in K$.
Lemma ~\ref{lemma:short path} implies that $S$ contains a path
of length $d(x,y)$ from $x$ to $y$, and therefore
Lemma ~\ref{lemma:up-down 2} implies that $S$ contains  both
an up-down path and a down-up path of length $d(x,y)$
and so by Proposition ~\ref{lemma:up-down 1}, $x \vee y$ and $x \wedge y$
belong to $K$, and so $K$ is a sublattice.

Next we show that $K$ is cover-preserving. 
Since $S \subseteq C_L[K]$, by Proposition ~\ref{prop:cover preserving} it suffices to consider
a pair $x,y \in K$ with $y>x$ and show
that $S$ contains a directed path from $y$ to $x$.
By Lemma ~\ref{lemma:up-down 2}, there is  an up-down path in $S$ from $y$ to $x$
of length $d(x,y)$. By Lemma ~\ref{lemma:up-down 1} the up segment of this path ends with $x \vee y=y$, which
means the path has no up steps and is therefore directed.  

Finally we show $S=C_L[K]$. By definition of $K$, $S \subseteq C_L[K]$.  To show the reverse implication,
suppose $y \rightarrow x \in C_L[K]$. By the previous paragraph, there is a directed path from $y$ to $x$ in $S$
which must consist of the single arc $y \rightarrow x$ and so $y \rightarrow x \in S$.

This completes the 
proof of Theorem ~\ref{thm:diamond-closed}.
\end{proof}

We note the following:

\begin{corollary}
\label{cor:1 to 1}
For any finite modular lattice $L$:
\begin{enumerate}
\item The mapping $K \longrightarrow C_K$ is a one-to-one correspondence between cover-preserving sublattices of $L$
and connected diamond-closed subsets of $C_L$.
\item
The mapping $\mfK \longrightarrow C_{\mfK}$ is a one-to-one correspondence between CS-packings of $L$
and diamond-closed subsets of $L$. 
\end{enumerate}
\end{corollary}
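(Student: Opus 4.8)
The plan is to deduce both parts of Corollary~\ref{cor:1 to 1} from Theorem~\ref{thm:diamond-closed}, which already carries the substantive structural content; what remains is only to check that each map is well-defined, surjective, and injective. Throughout I will tacitly restrict to \emph{non-trivial} sublattices (those with at least two elements): a one-element sublattice $K$ has $C_L[K]=\emptyset$, so admitting such $K$ would destroy injectivity, and this is the sole degenerate case. Correspondingly the graph-side objects are the non-empty connected diamond-closed subsets in part~(1), and the diamond-closed subsets in part~(2). I will use the following \emph{claim}: for a non-trivial cover-preserving sublattice $K$, the vertex set $V(C_L[K])$ equals $K$. Indeed $V(C_L[K])\subseteq K$ is immediate, and conversely, given $x\in K$, the finite lattice $K$ has $\hat{0}_K<\hat{1}_K$, so $C_K$ contains a directed path from $\hat{1}_K$ to $\hat{0}_K$ passing through $x$; since $x$ is an endpoint of at least one arc of that path (even when $x\in\{\hat{0}_K,\hat{1}_K\}$) and $C_K=C_L[K]$, the vertex $x$ is non-isolated in $C_L[K]$.

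For part~(1): \textbf{Well-definedness.} If $K$ is cover-preserving then $C_L[K]=C_K$ is connected (Section~\ref{subsec:lattices}, since $C_K$ is connected) and diamond-closed (Proposition~\ref{prop:sublattice}), hence a connected diamond-closed subset of $C_L$. \textbf{Surjectivity.} If $S\subseteq C_L$ is connected and diamond-closed, it has a single arc-component, so the implication $(1)\Rightarrow(2)$ of Theorem~\ref{thm:diamond-closed} yields a cover-preserving sublattice $K$ with $S=C_L[K]$. \textbf{Injectivity.} By the claim, $V(C_L[K])=K$, so $K$ is recovered from its image. Hence $K\mapsto C_L[K]$ is a bijection whose inverse sends a connected diamond-closed $S$ to the cover-preserving sublattice $V(S)$.

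For part~(2): \textbf{Well-definedness.} For a CS-packing $\mfK=\{K_1,\dots,K_s\}$, the set $C_{\mfK}=\bigcup_i C_L[K_i]$ is diamond-closed by the implication $(3)\Rightarrow(1)$ of Theorem~\ref{thm:diamond-closed} (equivalently, by the remark following the definition of $C_{\mfK}$). \textbf{Bijectivity.} The key observation is that the sets $C_L[K_i]$ are precisely the arc-components of $C_{\mfK}$: each is connected and non-empty, they are pairwise vertex-disjoint since $V(C_L[K_i])\subseteq K_i$ and the $K_i$ are disjoint, and $C_{\mfK}$ contains no arc joining distinct $K_i$. Surjectivity then follows from $(1)\Rightarrow(3)$ of Theorem~\ref{thm:diamond-closed}, discarding any trivial members of the CS-packing it produces. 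Injectivity follows by recovering $\mfK$ from $S=C_{\mfK}$ as $\{\,V(A'):A'\text{ an arc-component of }S\,\}$: by Theorem~\ref{thm:diamond-closed} each such $A'$ equals $C_L[K']$ for a (necessarily non-trivial) cover-preserving sublattice $K'$, and by the claim $K'=V(A')$. Thus $\mfK\mapsto C_{\mfK}$ is a bijection.

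The only genuine obstacle is the injectivity step, which rests entirely on the claim $V(C_L[K])=K$ for non-trivial cover-preserving $K$ and on the identification of the $C_L[K_i]$ with the arc-components of $C_{\mfK}$; everything else is a direct appeal to Theorem~\ref{thm:diamond-closed} and Proposition~\ref{prop:sublattice}. The one point of care in the writeup is to state explicitly that trivial (one-element) sublattices are excluded, so that ``correspondence'' is literally a bijection; one could alternatively absorb this by phrasing the statement in terms of connected sublattices.
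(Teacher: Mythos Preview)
Your proof is correct and follows exactly the route the paper intends: the corollary is stated there without proof, as immediate from Theorem~\ref{thm:diamond-closed} and Proposition~\ref{prop:sublattice}, and you have supplied precisely the routine verification (well-definedness, surjectivity, injectivity via recovering $K$ as the vertex set) that the paper leaves implicit. Your explicit observation that one-element sublattices must be excluded to make the bijection literal is a valid technical point the paper glosses over; it does not reflect a different method, only more care in the same argument.
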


\subsection{Constructing the diamond-closure in finite modular lattices}
\label{subsec:ML diamond-closure}

The results of the previous subsection yield an explicit description of the closure of
a set $A$ of arcs.  For the case that $A$ is connected we have:

\begin{corollary}
\label{cor:closure 1}
Let $L$ be a finite modular lattice and let $A$ be a connected subset of  $C_L$. Let $L(A)$ be the
set of vertices of $L$ spanned by $A$ (i.e., that belong to some arc of $A$) and let $K=\langle L(A) \rangle_L$
be the sublattice of $L$ generated by $L(A)$.  Then  $K$ is cover-preserving and $\bigdiamond(A)$ is equal to the
set  $C_L[K]$ of arcs induced on the sublattice $K$.
\end{corollary}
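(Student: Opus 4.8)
The plan is to derive the corollary quickly from Theorem~\ref{thm:diamond-closed} and Proposition~\ref{prop:sublattice}; essentially all of the substance was already established in the proof of Theorem~\ref{thm:diamond-closed}. First I would record that $\bigdiamond(A)$ is connected. To see this, build $\bigdiamond(A)$ by the direct procedure described right after Proposition~\ref{prop:closed decomp}: starting from $S=A$, repeatedly adjoin a diamond $D$ that is spanned by the current set $S$ but not yet contained in it. At each such step $S$ already contains the in-V or the out-V of $D$, so the two new arcs of $D$ are incident to vertices already in $V(S)$; hence $S\cup D$ is again connected. Since $A$ is connected, induction on the number of steps shows $\bigdiamond(A)$ is connected. (The case $A=\emptyset$ is trivial: then $L(A)=\emptyset$, $K=\langle\emptyset\rangle_L$, and $\bigdiamond(A)=\emptyset=C_L[K]$; so assume $A\neq\emptyset$, whence $\bigdiamond(A)\neq\emptyset$.)

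Next, apply Theorem~\ref{thm:diamond-closed} to the connected diamond-closed set $\bigdiamond(A)$: it equals $C_L[K']$ for a cover-preserving sublattice $K'$, and --- inspecting the proof of that theorem, or using Corollary~\ref{cor:1 to 1} together with the fact that $\bigdiamond(A)$ is a nonempty connected arc-set --- we may take $K'$ to be $V(\bigdiamond(A))$, the set of vertices spanned by $\bigdiamond(A)$. It then remains to identify $K'$ with $K=\langle L(A)\rangle_L$, which I would do by two inclusions. For $K\subseteq K'$: from $A\subseteq\bigdiamond(A)$ we get $L(A)\subseteq V(\bigdiamond(A))=K'$, and since $K'$ is a sublattice, the sublattice $K$ generated by $L(A)$ lies in $K'$. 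For $K'\subseteq K$: by Proposition~\ref{prop:sublattice} the arc-set $C_L[K]$ is diamond-closed, and it contains $A$ because $L(A)\subseteq K$ forces every arc of $A$ (which lies in $C_L$) to lie in $C_L[K]$; hence $\bigdiamond(A)\subseteq C_L[K]$, and passing to vertex sets gives $K'=V(\bigdiamond(A))\subseteq V(C_L[K])\subseteq K$.

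Combining the two inclusions, $K=K'$. In particular $K$ is a cover-preserving sublattice, and $\bigdiamond(A)=C_L[K']=C_L[K]$, which is exactly the assertion of the corollary.

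I do not expect a genuine obstacle here: the corollary is mostly a repackaging of Theorem~\ref{thm:diamond-closed} and Proposition~\ref{prop:sublattice} in the language of the closure operator $\bigdiamond$. The one point that needs a little care is the connectivity of $\bigdiamond(A)$ --- it is what lets us invoke Theorem~\ref{thm:diamond-closed} in its single-component form and what guarantees that $V(\bigdiamond(A))$ really is the full sublattice $K'$ rather than a proper subset of it (a discrepancy that can occur only in the degenerate one-element case, which is handled separately above).
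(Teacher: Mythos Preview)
Your argument is correct and follows essentially the same route as the paper: establish that $\bigdiamond(A)$ is connected, invoke the structure theorem to write $\bigdiamond(A)=C_L[K']$ for a cover-preserving sublattice $K'$, and then prove $K'=K$ by the same two inclusions. The only minor difference is in the connectivity step---the paper argues it via Proposition~\ref{prop:closed decomp} (the arc-component of $\bigdiamond(A)$ containing $A$ is already diamond-closed, hence equals $\bigdiamond(A)$), whereas you trace connectivity through the iterative diamond-adding procedure; both are valid and equally short.
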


\begin{proof}
$\bigdiamond(A)$ is connected since each  arc-component of $\bigdiamond(A)$ is diamond-closed, and so the
arc-component containing $A$ is a closed subset of $\bigdiamond(A)$ that contains $A$ and so must equal
$\bigdiamond(A)$.  By Corollary ~\ref{cor:1 to 1}, $\bigdiamond(A)=C_L[J]$ for some cover-preserving sublattice $J$ of $L$, and we claim
$J=K$.
$A \subseteq C_L[J]$ implies $V(A) \subseteq J$ and hence $K = \langle V(A) \rangle_L \subseteq J$.
 $C_L[K]$ is diamond-closed and contains $A$, so $C_L[J] = \bigdiamond(A) \subseteq C_L[K]$ and hence
$J \subseteq K$.  
\end{proof}

For general $A$, Corollary ~\ref{cor:1 to 1} implies that there is a unique CS packing $\mfK(A)$ such that
$\bigdiamond(A) = C_{\mfK(A)}$.
The following procedure can be used to determine $\mfK(A)$:

\begin{enumerate}
\item Let $\mathfrak{Y}$ be the set of vertex components of $A$.
\item Let $\mathfrak{K}=\{\langle Y \rangle_L:Y \in \mathfrak{Y}\}$.
\item  While
 $\mathfrak{K}$ contains two distinct sublattices $K,K'$ with nonempty intersection,
remove $K,K'$ from $\mathfrak{K}$ and add $\langle K \cup K' \rangle_L$.
\end{enumerate}

\begin{thm}
\label{thm:MLDC}
For any finite modular lattice $L$ and $A \subseteq C_L$, the above procedure terminates 
with $\mathfrak{K}=\mfK(A)$.
\end{thm}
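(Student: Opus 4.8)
The plan is to show that the procedure terminates with the unique CS-packing $\mfK(A)$ guaranteed by Corollary~\ref{cor:1 to 1}, by verifying two things: that the procedure terminates, and that the final collection $\mathfrak{K}$ satisfies $C_{\mathfrak{K}} = \bigdiamond(A)$. Termination is immediate from finiteness: each iteration of step (3) strictly decreases $|\mathfrak{K}|$, so the loop runs at most $|\mathfrak{Y}|$ times, and each $\langle Y \rangle_L$ and each $\langle K \cup K' \rangle_L$ is a well-defined sublattice of the finite lattice $L$. So the real content is the correctness of the output.

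First I would record an invariant maintained throughout the execution of step (3): at every stage, $\mathfrak{K}$ is a finite collection of sublattices of $L$, each of which is generated (as a sublattice) by a union of vertex-components of $A$, and collectively the members of $\mathfrak{K}$ contain $V(A)$. The key point is that each $K \in \mathfrak{K}$ satisfies $K = \langle K \cap V(A) \rangle_L$ and $K \cap V(A)$ is a union of vertex-components of $A$; this is preserved when we merge $K, K'$ because $\langle K \cup K' \rangle_L = \langle (K \cap V(A)) \cup (K' \cap V(A)) \rangle_L$ (a sublattice is generated by any generating set of each piece). When the loop halts, the members of $\mathfrak{K}$ are pairwise disjoint, so $\mathfrak{K}$ is a sublattice packing; I then need that each member is cover-preserving. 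For this I would argue that $C_L[K]$ is diamond-closed (Proposition~\ref{prop:sublattice}) and connected — connectedness because $C_L[K]$ contains $A[K]$, whose vertex-components are exactly the vertex-components of $A$ lying in $K$, and these were fused into a single sublattice only through chains of pairwise-intersecting sublattices, so $C_L[K]$ is connected; then Theorem~\ref{thm:diamond-closed} (the equivalence $(1) \Leftrightarrow (2)$ applied to the connected set $C_L[K]$) gives that $K$ is cover-preserving. Hence $\mathfrak{K}$ is a CS-packing.

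Next I would show $C_{\mathfrak{K}}$ is diamond-closed and contains $A$, hence $\bigdiamond(A) \subseteq C_{\mathfrak{K}}$. Containment of $A$: every arc of $A$ joins two vertices in the same vertex-component of $A$, which lies entirely inside some member $K$ of the final $\mathfrak{K}$, so the arc is in $C_L[K] \subseteq C_{\mathfrak{K}}$. Diamond-closedness of $C_{\mathfrak{K}}$: it is a union of the diamond-closed sets $C_L[K]$ (Proposition~\ref{prop:sublattice}) over disjoint sublattices, and since distinct $C_L[K], C_L[K']$ share no vertices (the $K$ are disjoint) they are disconnected, so by Proposition~\ref{prop:closed decomp} the union is diamond-closed. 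For the reverse inclusion $C_{\mathfrak{K}} \subseteq \bigdiamond(A)$, I would use Corollary~\ref{cor:closure 1}: on each vertex-component $Y$ of $A$ (equivalently, each arc-component $A_Y$ of $A$), $\bigdiamond(A_Y) = C_L[\langle Y \rangle_L]$. Now $\bigdiamond(A) \supseteq \bigcup_Y \bigdiamond(A_Y) = \bigcup_Y C_L[\langle Y \rangle_L] = C_{\mathfrak{K}_0}$ where $\mathfrak{K}_0$ is the collection after step (2). Then I induct on the merge steps: if $K, K'$ have a common element $z$, then $C_L[K] \cup C_L[K'] \subseteq \bigdiamond(A)$ by the inductive hypothesis, this union is connected (both contain $z$), hence its diamond-closure is connected, and by Corollary~\ref{cor:closure 1} that closure equals $C_L[\langle V(C_L[K] \cup C_L[K']) \rangle_L] = C_L[\langle K \cup K' \rangle_L]$, which therefore lies in $\bigdiamond(A)$. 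This shows every intermediate $C_{\mathfrak{K}}$ stays inside $\bigdiamond(A)$, and in particular the final one does. Combining the two inclusions gives $\bigdiamond(A) = C_{\mathfrak{K}}$, and uniqueness in Corollary~\ref{cor:1 to 1}(2) forces $\mathfrak{K} = \mfK(A)$.

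The main obstacle I anticipate is the bookkeeping around connectedness during the merge process — specifically, ensuring that when two sublattices $K, K'$ in $\mathfrak{K}$ are merged, the new induced arc-set $C_L[\langle K \cup K'\rangle_L]$ is genuinely connected (so that Corollary~\ref{cor:closure 1} applies to it) rather than merely a union of two connected pieces sharing a single vertex. The resolution is that an arc-set whose pieces share a common vertex is connected as a set of arcs in the sense defined in Section~\ref{subsec:digraphs} (any two non-isolated vertices are joined by a path through that common vertex), so Corollary~\ref{cor:closure 1} does apply; but one must be a little careful that the common element $z$ of the two sublattices is actually a non-isolated vertex of $C_L[K] \cup C_L[K']$, which holds because $z \in K = \langle K \cap V(A)\rangle_L$ forces $z$ to lie on some arc of $C_L[K]$ whenever $K$ has at least two elements (and the degenerate case $|K| = 1$ contributes no arcs and can be dispatched separately). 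A secondary subtlety is confirming that the final $\mathfrak{K}$ really does partition (cover) $V(A)$ so that it is a packing with $C_{\mathfrak{K}} \supseteq A$; this follows from the invariant that $\bigcup \mathfrak{K} \supseteq V(A)$ is preserved at every step, together with the final disjointness guaranteed by the loop's exit condition.
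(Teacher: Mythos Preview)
Your proposal is correct and follows essentially the same approach as the paper: show termination by size decrease, then verify by induction on the merge steps that $C_L[\mathfrak{K}] \subseteq \bigdiamond(A)$ throughout (using Corollary~\ref{cor:closure 1} for the base case and for each merge), and combine with $A \subseteq C_L[\mathfrak{K}]$ and the diamond-closedness of the final $C_L[\mathfrak{K}]$ to conclude via Corollary~\ref{cor:1 to 1}. You are in fact more careful than the paper in one respect: the paper asserts ``since $\mathfrak{K}^*$ is a CS-packing'' and writes $C_L[\langle K \cup K'\rangle_L]=\bigdiamond(C_L[K \cup K'])$ without justifying the cover-preserving property or the connectedness hypothesis needed for Corollary~\ref{cor:closure 1}, whereas you flag and address both. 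One simplification: your invariant $K=\langle K\cap V(A)\rangle_L$ and the argument about vertex-components of $A$ inside $K$ are unnecessary detours; it is cleaner to maintain inductively that every $K\in\mathfrak{K}$ is itself cover-preserving (true initially by Corollary~\ref{cor:closure 1}, and preserved under merging since $C_L[K]\cup C_L[K']$ is a connected arc-set with vertex set $K\cup K'$, so Corollary~\ref{cor:closure 1} applies again), which immediately gives both the connectedness you need and the CS-packing conclusion at termination.
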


\begin{proof}
Since $\mathfrak{K}$ is initially finite, and shrinks in size by one during each iteration of the while loop, this procedure terminates.
Let $\mathfrak{K}^*$ be the final value of $\mathfrak{K}$ and let $\mathfrak{K}^0$ be the initial value of $\mathfrak{K}$. 

We claim 
$\bigdiamond(C_L[\mathfrak{K}])=\bigdiamond(A)$ holds throughout the procedure.   It is clear from the procedure that $A \subseteq C_L[\mathfrak{K}^0]$ and that
$C_L[\mathfrak{K}]$ can only grow, and therefore $A \subseteq C_L[\mathfrak{K}]$ and $\bigdiamond(A) \subseteq
\bigdiamond(C_L[\mathfrak{K}])$.  The claim will then follow 
by showing $C_L[\mathfrak{K}] \subseteq \bigdiamond(A)$ after every iteration.  We prove this by induction on the number
of iterations.  For $\mathfrak{K}^0$, let $\mathfrak{A}$ denote the
set of arc-components of $A$. For each $B \in \mathfrak{A}$, let $Y(B)$ be the associated vertex component, and let $K(B)=\langle Y(B) \rangle_L$.
By Corollary ~\ref{cor:closure 1}, $C_L[K(B)] = \bigdiamond(B) \subseteq \bigdiamond(A)$ and so 
$C_L[\mathfrak{K}^0]=\bigcup_{B \in \mathfrak{A}}C_L[K(B)] \subseteq \bigdiamond(A)$.

Now for a given iteration, assume by induction that  $C_L[\mathfrak{K}] \subseteq \bigdiamond(A)$ holds prior
to the iteration.   Let $K,K'$ be members of $\mathfrak{K}$
with nonempty intersection.  Then $C_L[\langle K \cup K'\rangle_L]=\bigdiamond(C_L[K \cup K']) \subseteq \bigdiamond(C_L[\mathfrak{K}]) \subseteq
\bigdiamond(A)$ and so $C_L[\mathfrak{K}-\{K,K'\} \cup \langle K \cup K' \rangle_L] \subseteq \bigdiamond(A)$.

So $\bigdiamond(C_L[\mathfrak{K}^*] )= \bigdiamond(A)$ and since $\mathfrak{K}^*$ is a CS-packing, Corollary ~\ref{cor:1 to 1} implies
$\mfK(A)=\mathfrak{K}^*$.
\end{proof}

\section{Diamond-closure in  distributive lattices}
\label{sec:connected}

Theorem ~\ref{thm:MLDC} describes the
diamond-closure operation for a finite modular lattice.  In this section
we give a simplified description in the case that $L$ is a finite distributive lattice.

We first use the Birkhoff representation theorem for distributive lattices to give a convenient way to describe the sublattices and
the cover-preserving sublattices of a distributive lattice. 

\subsection{Representing distributive lattices and sublattices}

Let $(Q,\leq)$ be a set with a quasi-order (i.e. a transitive and reflexive relation)
Such a relation splits $Q$ into equivalence classes where 
$x$ and $y$ are equivalent provided that $x \leq y$ and $y \leq x$ and $\leq$ induces a partial
order on the equivalence classes.   A partial order is a quasi-order
where all equivalence classes have size 1. 

We adopt the convention that every
quasi-ordered set is equipped with distinguished 
elements $\hat{0}_Q$ and $\hat{1}_Q$  such that $\hat{0}_Q \leq  x \leq \hat{1}_Q$ for all $x \in Q$ and $\hat{0}_Q<\hat{1}_Q$.
We refer to such a quasi-order as {\em pointed}.
The equivalence classes of $\hat{0}_Q$ and $\hat{1}_Q$ may have size larger than 1,
but these classes must be distinct.    A {\em downset} of $(Q,\leq)$ is a subset $D$ satisfying (1) if $x \leq y \in Q$ and $y \in D$
then $x \in D$, (2) $\hat{0}_Q \in D$ and (3) $\hat{1}_Q \not\in D$.    

The requirement that $Q$ be  pointed  and that a downset satisfy conditions (2) and (3)
are non-standard but useful for stating our results, especially the description of connected diamond-closed
subsets given by Theorem ~\ref{thm:DL connected DC}. 

An {\em extension} of a pointed quasi-order $(Q,\leq)$ is a pointed  quasi-order
$(Q,\leq^*)$  that is at least as strong as $\leq$,
i.e. $x \leq y$ implies $x \leq^* y$. 

The set $\mathcal{D}(Q,\leq)$ 
of all downsets of $(Q,\leq)$ is an alignment on $Q$. When this alignment is viewed as a lattice under set inclusion, $\wedge$
corresonds to set-intersection (as for any alignment) and
$\vee$  coincides to set union since $\mathcal{D}(Q,\leq)$ is closed under union.
For example, if $(Q,\leq)$ is the trivial ordering containing only relations $x \leq x$, then $\mathcal{D}(Q,\leq)$
is just the lattice $2^Q$ of all subsets of $Q$.    In general,
$\mathcal{D}(Q,\leq)$ is always a sublattice of the lattice $2^Q$ of all subsets of $Q$, and is thus distributive.
These are the only finite distributive lattices:

\begin{thm}
\label{thm:birkhoff}
(Birkhoff \cite{Birk})
For every finite distributive lattice $L$, there is a pointed partially ordered set $(P,\leq)$, unique up to isomorphism, 
such that $L$ is isomorphic to $\mathcal{D}(P,\leq)$.       
\end{thm}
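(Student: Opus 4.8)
The plan is to prove the Birkhoff representation theorem by the standard route: identify the partially ordered set $(P,\leq)$ with the join-irreducible elements of $L$ (adjusting for the pointed convention), and show that the map sending each element to the downset of join-irreducibles below it is a lattice isomorphism onto $\mathcal{D}(P,\leq)$.

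First I would define an element $p \in L$ to be \emph{join-irreducible} if $p \neq \hat{0}_L$ and $p = x \vee y$ implies $p = x$ or $p = y$; equivalently, since $L$ is finite, $p$ covers a unique element $p_-$. Let $J(L)$ be the set of join-irreducibles. To match the pointed convention, I would take $P = J(L)$ with the induced order and then formally adjoin the distinguished elements: set $\hat{0}_P$ to be a new bottom element (or, if one prefers, observe that the equivalence-class formalism lets $\hat 0_Q$ and $\hat 1_Q$ be handled as in the definition of pointed quasi-orders) and $\hat{1}_P$ a new top, so that downsets of $P$ automatically contain $\hat 0_P$ and exclude $\hat 1_P$. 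The core combinatorial input, proved by induction on height using finiteness, is that every element $x \in L$ is the join of the join-irreducibles it dominates: $x = \bigvee\{p \in J(L): p \leq x\}$. This uses only that $L$ has finite length.

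Next I would define $\eta: L \to \mathcal{D}(P,\leq)$ by $\eta(x) = \{p \in J(L) : p \leq x\}$ (together with $\hat 0_P$, and never $\hat 1_P$), and check it is well-defined into $\mathcal{D}(P,\leq)$: the set is clearly a downset in $J(L)$. That $\eta$ is order-preserving is immediate; that $\eta$ is injective follows from the decomposition $x = \bigvee \eta(x)$ above. For surjectivity, given a downset $D$, set $x = \bigvee D$ and verify $\eta(x) = D$; the nontrivial inclusion $\eta(x) \subseteq D$ is where \emph{distributivity} enters: if $p$ is join-irreducible and $p \leq \bigvee D$, then $p = p \wedge \bigvee D = \bigvee_{d \in D}(p \wedge d)$, and join-irreducibility of $p$ forces $p = p \wedge d$ for some $d \in D$, so $p \leq d$ and $p \in D$ since $D$ is a downset. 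Then $\eta$ preserves $\wedge$ (straightforward: $\eta(x \wedge y) = \eta(x) \cap \eta(y)$ from the order) and preserves $\vee$ (using $\eta(x \vee y) \supseteq \eta(x) \cup \eta(y)$ always, with the reverse inclusion again from the argument above applied to $D = \eta(x) \cup \eta(y)$). Hence $\eta$ is a lattice isomorphism.

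Finally, for uniqueness up to isomorphism: if $L \cong \mathcal{D}(P',\leq')$ for a pointed poset $P'$, one recovers $P'$ (up to isomorphism, and up to the distinguished elements) as the poset of join-irreducibles of $\mathcal{D}(P',\leq')$, which are exactly the principal downsets $\{q : q \leq p'\}$ for $p' \in P' \setminus \{\hat 0_{P'}, \hat 1_{P'}\}$, ordered by inclusion, which is order-isomorphic to $P'$ itself. So the abstract lattice determines $P'$. The main obstacle — really the only substantive point — is handling the pointed convention cleanly: ordinary Birkhoff uses the honest poset of join-irreducibles, whereas here downsets are forced to contain $\hat 0$ and omit $\hat 1$, so I would be careful to set up $P$ so that its distinguished bottom lies below every join-irreducible and its distinguished top above every join-irreducible, and check that this bookkeeping does not disturb the bijection. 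Since this is a statement of a standard theorem cited to \cite{Birk}, I expect the paper's own "proof" to be a pointer to the literature with at most a remark reconciling the pointed convention.
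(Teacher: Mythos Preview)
Your sketch is correct and is the standard proof of Birkhoff's representation theorem; the only delicate bookkeeping, which you flag, is matching the paper's pointed convention by adjoining formal $\hat 0_P$ and $\hat 1_P$. As you anticipated, the paper gives no proof of this theorem at all: it is simply stated with the citation to \cite{Birk}, so there is nothing to compare your argument against.
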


Note that if $(Q,\leq)$ is a quasi-order then 
$\DD(Q,\leq)$
is isomorphic to $\DD(P,\leq)$ where 
$(P,\leq)$ is the pointed partial order on equivalence classes of $Q$
mentioned earlier. 

In what follows we consider an arbitrary finite distributed lattice represented as $(P,\leq)$ 
for some pointed partial order $(P,\leq)$. As
the members of $\DD(P,\leq)$ are sets, we   denote this lattice by $\LL$ rather than $L$,
and denote subsets of $\LL$ by calligraphic letters, and members of $\LL$ by upper case letters.  

As we now describe, this representation 
provides an easy way to describe  sublattices of $\LL$, cover-preserving sublattices of $\LL$, the sublattice generated
by an arbitrary subset of $\LL$, and the diamond closure of subsets of arcs of the closure graph.

Theorem ~\ref{thm:DL generated} establishes a natural correspondence between sublattices of $\mathcal{D}(P,\leq)$ and
extensions of $(P,\leq)$.  In preparation we need some notation.

For $\YY \subseteq \DD(P,\leq)$,  the partial order $\leq^{\YY}$ on $P$ is defined by
$i \leq^{\YY} j$
provided that 
every member of $\YY$ that contains $j$ also contains $i$.    If $i \leq j$, then
we necessarily have $i  \leq ^{\YY}j$ (since every downset of $(P,\leq)$ that contains $j$
also contains $i$) and so 
 $(P,\leq^{\YY})$ is an extension of $(P,\leq)$.

The following  is a natural extension of Lemma 4.3 from \cite{siggers2014} with some differences in notation.

\begin{thm}
\label{thm:DL generated}
Let $(P,\leq)$ be a pointed partially ordered set, and let $\LL=\DD(P,\leq)$.  There is a one-to-one correspondence between
the sublattices of $\LL$ and the extensions $(P,\leq^*)$ of $(P,\leq)$ given by the following inverse maps:

\begin{itemize}
\item Extension $(P,\leq^*)$ maps to sublattice $\DD(P,\leq^*)$ of $\LL$.
\item Sublattice $\MM$ maps to extension  $(P,\leq^{\MM})$ 
\end{itemize}

Furthermore, for an extension $(P,\leq^*)$, $\DD(P,\leq^*)$ 
is a cover-preserving sublattice of $\LL$  if and only if every equivalence classes of $(P,\leq^*)$
other than the equivalence classes $E_0$ and $E_1$ of $\hat{0}$ and $\hat{1}$
have size exactly one.  
\end{thm}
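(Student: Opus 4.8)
The plan is to establish the bijection in two halves and then handle the cover-preserving refinement. For the first half, I would check that the two maps are well-defined and mutually inverse. Given an extension $(P,\leq^*)$ of $(P,\leq)$, the set $\DD(P,\leq^*)$ consists of downsets of a stronger order, hence is a subset of $\DD(P,\leq)$; since downsets of any quasi-order are closed under finite union and intersection, $\DD(P,\leq^*)$ is a sublattice of $\LL$. Conversely, given a sublattice $\MM \subseteq \LL$, the relation $\leq^{\MM}$ is transitive and reflexive (an intersection of the total orders ``contains $j$ implies contains $i$'' over $A \in \MM$), contains $\leq$, and respects $\hat 0_P, \hat 1_P$ because every $A \in \LL$ already contains $\hat 0_P$ and excludes $\hat 1_P$; so $(P,\leq^{\MM})$ is a pointed extension. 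The round trip $(P,\leq^*) \mapsto \DD(P,\leq^*) \mapsto (P,\leq^{\DD(P,\leq^*)})$ recovers $\leq^*$: if $i \not\leq^* j$ then the principal downset $\{k : k \leq^* j\}$ is a member of $\DD(P,\leq^*)$ containing $j$ but not $i$, witnessing $i \not\leq^{\DD(P,\leq^*)} j$, while the reverse containment $\leq^* \subseteq \leq^{\DD(P,\leq^*)}$ is immediate.

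The substantive direction is the other round trip: for a sublattice $\MM$, we must show $\DD(P,\leq^{\MM}) = \MM$. The inclusion $\MM \subseteq \DD(P,\leq^{\MM})$ is routine, since each $A \in \MM$ is a downset for $\leq$ and, by the very definition of $\leq^{\MM}$, is a downset for $\leq^{\MM}$. For the reverse inclusion, take $A \in \DD(P,\leq^{\MM})$; I would recover $A$ from $\MM$ by the standard ``cover each element, exclude each non-element'' argument. For each $i \notin A$ and each $j \in A$ we do not have $i \leq^{\MM} j$, so there is $B(i,j) \in \MM$ with $j \in B(i,j)$, $i \notin B(i,j)$. Set $C(i) = \bigcup_{j \in A} B(i,j) \in \MM$ (a finite union, using that $\MM$ is a sublattice and $A$ is finite); then $A \subseteq C(i)$ and $i \notin C(i)$. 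Then $\bigcap_{i \notin A} C(i) \in \MM$ equals $A$ exactly, so $A \in \MM$. (One must note $A \neq \emptyset$ as it contains $\hat 0_P$, and handle the degenerate cases where $A$ is all of $P$ or the index sets are empty, but those are trivial since $P \in \LL$ iff $\hat 1_P$ lies in it, which it never does for a downset.) This gluing step is the main obstacle, mostly in being careful that all the unions and intersections stay inside $\MM$ and that the pointedness conventions are respected.

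For the cover-preserving characterization, I would use Birkhoff's theorem together with the structure of covers in $\DD(P,\leq^*)$. Identifying $\DD(P,\leq^*)$ with $\DD(P',\leq')$ where $(P',\leq')$ is the partial order on $\leq^*$-equivalence classes, a cover in any $\DD(P',\leq')$ is adding a single maximal element of the complement, so it raises the height by the size of the corresponding equivalence class of $P'$ (viewed back inside $P$). Since covers in $\LL = \DD(P,\leq)$ raise the ambient height by exactly $1$, a cover $A \lessdot_{\DD(P,\leq^*)} A'$ is also a cover in $\LL$ if and only if the added equivalence class has size $1$ in $P$; thus $\DD(P,\leq^*)$ is cover-preserving iff every equivalence class of $(P,\leq^*)$ has size one. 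The one wrinkle is the classes $E_0, E_1$ of $\hat 0$ and $\hat 1$: these never get ``added'' or ``removed'' in passing between comparable downsets ($\hat 0_P$ is in every downset, $\hat 1_P$ in none), so their sizes are irrelevant, which is exactly why they are excluded in the statement. I would make this precise via the height-function criterion (part (3) of Theorem~\ref{thm:modular}) or directly via Proposition~\ref{prop:cover preserving}, checking that a directed cover-path in $\LL$ within $\DD(P,\leq^*)$ between comparable downsets exists iff no equivalence class of size $>1$ (other than $E_0,E_1$) must be crossed.
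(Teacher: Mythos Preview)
Your proof is correct and follows essentially the same architecture as the paper's. The one noteworthy variation is in the key inclusion $\DD(P,\leq^{\MM})\subseteq\MM$: you recover $A$ as $\bigcap_{i\notin A}\bigl(\bigcup_{j\in A}B(i,j)\bigr)$, whereas the paper uses the dual construction, writing $A=\bigcup_{j\in A}D_j$ with $D_j=\{i:i\leq^{\MM}j\}$ equal to the intersection of all members of $\MM$ containing $j$. Both are standard separation arguments and equally valid; the paper's version is marginally cleaner because the sets $D_j$ are intrinsically defined (principal downsets of $\leq^{\MM}$) rather than depending on choices $B(i,j)$, and the edge cases you flag do not arise. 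For the cover-preserving part, the paper isolates your height observation as a separate proposition (covers in $\DD(P,\leq^*)$ are exactly removals of $\leq^*$-maximal equivalence classes) and then argues both directions directly from it, which is the same content as your height-based argument.
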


\begin{proof}
First we show that the map $(P,\leq^*)$ to $\DD(P,\leq^*)$ is a one-to-one map
from the set of extensions of $(P,\leq)$ to the set of sublattices of $\LL$.
For any extension $(P,\leq^*)$  of  $(P,\leq)$,  
$\mathcal{D}(P,\leq^*)$ is a sublattice of $\LL$ that is closed under intersection and union.
The map is one-to-one: If $(P,\leq^*)$ and $(P,\leq^\#)$ are distinct quasi-orders, without loss of
generality suppose $i \leq^* j$ and $i$ is not $\leq^\# j$.  $\DD(P,\leq^{\#})$ includes the
set $\{h \in P:h \leq^{\#} j\}$ and since this set contains $j$ and not $i$, it is not a member of
$\DD(P,\leq^*)$.  Hence $\DD(P,\leq^*) \neq \DD(P,\leq^{\#})$.  

Next we show that if 
$\MM$ is any sublattice
then $\DD(P,\leq^{\MM})=\MM$.   If $D \in \MM$ then $D \in \DD(P,\leq^{\MM})$
since for any $i,j \in \LL$, if $j \in D$ and $i \leq^{\MM} j$ then $i \in D$ (since every set belonging to $\MM$
that contains $j$ also contains $i$ by the definition of $\leq^{\MM}$).  Thus $D \in \DD(P,\leq^{\MM})$ and so 
$\MM \subseteq \DD(P,\leq^{\MM})$.
Now suppose $D \in \DD(P,\leq^{\MM})$.   For each $j \in D$,  let $D_j=\{i \in P:i \leq^{\MM} j\}$. Then $D_j \subseteq D$
since $D \in \DD(P,\leq^{\MM})$ and so $D=\bigcup_{j \in D}D_j$.  But also $D_j$ is equal to the intersection of all members of $\MM$
that contain $j$. Since $\MM$ is a sublattice, each $D_j \in \MM$ and also $D=\bigcup_{j \in D}D_j \in \MM$.
Thus $\DD(P,\leq^{\MM}) \subseteq \MM$ and so $\DD(P,\leq^{\MM})=\MM$.    

This implies that
the map sending $(P,\leq^*)$ to $\DD(P,\leq^*)$ is a bijection from the set of extensions $(P,\leq^*)$ of $(P,\leq)$
to the set of sublattices of $\LL$, and that the inverse map sends the sublattice $\MM$ of $\LL$ to the
extension $(P,\leq^*)$.

Suppose now $(P,\leq^*)$ is an extension of $(P,\leq)$.  We want to show $\DD(P,\leq^*)$ is cover-preserving if and only if the classes of $(P,\leq^*)$ other
than $E_0$ and $E_1$ have size 1.  

We first note the following easy fact.
\begin{proposition}
\label{prop:cover}
Suppose $(P,\leq^*)$ is an extension of $(P,\leq)$ and $D \in \DD(P,\leq^*)$ and $C \subset D$.  Then $C \in \DD(P,\leq^*)$ and
$D$ covers $C$ in $\DD(P,\leq^*)$ if and only if $D-C$ is an $\leq^*$-equivalence class  that is $\leq^*$-maximal among equivalence
classes inside $D$.
\end{proposition}

\begin{proof}
Suppose $D-C$ is $\leq^*$-equivalence class  that is $\leq^*$-maximal among equivalence classes inside $D$.  We claim that
$C \in \DD(P,\leq^*)$.  Let  $j \in C \subseteq D$ and $i<^*j$.  Then $i \in D$ since $D \in \DD(P,\leq^*)$.  Now $i \not\in D-C$ since
$D-C$ is maximal among equivalence classes inside $D$ and is therefore not $\leq^* j$.   Furthermore $D$ covers $C$ in $\DD(P,\leq^*)$ since  for $Z$  satisfying $C \subset Z \subset D$,
$Z$ is not a union of $\leq^*$-equivalence classes and so is not in $\DD(P,\leq^*)$.

Conversely, suppose $C \in \DD(P,\leq^*)$ and $D$ covers $C$ in $\DD(P,\leq^*)$.  Then $D-C$ is a union of $\leq^*$-equivalence classes. Let $E \subseteq D-C$ be an equivalence class that is $\leq^*$-maximal among classes contained in $D-C$.  
Since $C$ is a downset, no element of $C$ is above any element of $E$ and so $E$ is also maximal among classes in $D$.  Therefore $D$ covers $D-E \geq^*C$ in $\DD(P,\leq^*)$ which
implies $D-E=C$ since $D$ covers $C$ in $\DD(P,\leq^*)$.
\end{proof}

Now suppose $\DD(P,\leq^*)$ is a cover-preserving  sublattice of $\DD(P,\leq)$ and suppose $E$ is an $\leq^*$-equivalence class 
other than
$E_0$ and $E_1$. Select an arbitrary  $j \in E$  and let $D=\{i \in P: i \leq^* j\}$.  Then $D \in \DD(P,\leq^*)$ and $E \subseteq D$
is a $\leq^*$-maximal equivalence class in $D$.  By Proposition ~\ref{prop:cover}, $D$ covers $D-E$ in $\DD(P,\leq^*)$ and since 
$\DD(P,\leq^*)$
is cover-preserving, $D$ covers $D-E$ in $\DD(P,\leq)$.  By Proposition ~\ref{prop:cover} applied to $\DD(P,\leq)$, $E$
is a $\leq$-equivalence class, and therefore has size 1 since $\leq$ is a partial order.   

Conversely, suppose all $\leq^*$-equivalence classes other than $E_0$ and $E_1$ have size 1.  Let $C,D \in \DD(P,\leq^*)$
with $D$ covering $C$.  By Proposition ~\ref{prop:cover}, $D-C$ is a $\leq^*$-equivalence class.  It is not $E_1$ (since $\hat{1} \not\in D$), and it is not $E_0$ (since $\hat{0} \in C$),  and therefore it has size 1. Since $C \subset D \in \DD(P,\leq)$ and $|D-C|=1$ we also
have $D$ covers $C$ in $\DD(P,\leq)$.  Therefore $\DD(P,\leq^*)$ is cover-preserving.

\end{proof}

\begin{corollary}
\label{cor:DL connected closure}
Let $(P,\leq)$ be a finite partial order and suppose $\SSS$ is a subset of $\DD(P,\leq)$.   
\begin{enumerate}
\item The
sublattice generated by $\SSS$ is equal to $\DD(P,\leq^{\SSS})$.  
\item  If $\SSS$ is connected then $\DD(P,\leq^{\SSS})$ is cover-preserving.
\end{enumerate}
\end{corollary}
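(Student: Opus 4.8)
The plan is to derive both parts from Theorem~\ref{thm:DL generated}, which identifies the sublattices of $\LL=\DD(P,\leq)$ with the downset-lattices $\DD(P,\leq^*)$ of extensions $(P,\leq^*)$ of $(P,\leq)$, together with the cover-preservation criterion at the end of that theorem.

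For part (1), I would first recall that $(P,\leq^\SSS)$ is an extension of $(P,\leq)$ (observed just before Theorem~\ref{thm:DL generated}), so by Theorem~\ref{thm:DL generated} the set $\DD(P,\leq^\SSS)$ is a sublattice of $\LL$. Next I would check $\SSS\subseteq\DD(P,\leq^\SSS)$: if $D\in\SSS$, $j\in D$ and $i\leq^\SSS j$, then by the very definition of $\leq^\SSS$ (every member of $\SSS$ containing $j$ contains $i$) we get $i\in D$; together with $\hat 0_P\in D\not\ni\hat 1_P$, this says $D$ is a downset of $(P,\leq^\SSS)$. Hence $\langle\SSS\rangle_\LL\subseteq\DD(P,\leq^\SSS)$. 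For the reverse inclusion I would write $\langle\SSS\rangle_\LL=\DD(P,\leq^{\langle\SSS\rangle})$ (Theorem~\ref{thm:DL generated} again); since $\SSS\subseteq\langle\SSS\rangle_\LL$, any instance $i\leq^{\langle\SSS\rangle}j$ in particular forces every member of $\SSS$ containing $j$ to contain $i$, i.e. $i\leq^\SSS j$, so $\leq^{\langle\SSS\rangle}$ is contained in $\leq^\SSS$ as relations; therefore $\DD(P,\leq^\SSS)\subseteq\DD(P,\leq^{\langle\SSS\rangle})=\langle\SSS\rangle_\LL$. Combining the two inclusions yields part (1).

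For part (2), by the last assertion of Theorem~\ref{thm:DL generated} it suffices to show: if $\SSS$ is connected, then every $\leq^\SSS$-equivalence class other than the classes $E_0,E_1$ of $\hat 0_P,\hat 1_P$ is a singleton. Suppose not, and pick distinct $i,j$ in a common class $E\notin\{E_0,E_1\}$; note $i\sim^\SSS j$ means that for every $D\in\SSS$ we have $i\in D\Leftrightarrow j\in D$. Since every downset contains $\hat 0_P$ and none contains $\hat 1_P$, and since $\hat 0_P\leq^\SSS i\leq^\SSS\hat 1_P$ always, the hypothesis $E\ne E_0$ unwinds to: some $D^-\in\SSS$ has $i\notin D^-$, and $E\ne E_1$ unwinds to: some $D^+\in\SSS$ has $i\in D^+$. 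Thus $\SSS$ is partitioned into the nonempty sets $\SSS^+=\{D\in\SSS: i\in D\}$ and $\SSS^-=\{D\in\SSS: i\notin D\}$, the same partition being obtained with $j$ in place of $i$. Using connectedness of $\SSS$ (any two members of $\SSS$ are joined by a walk along cover relations of $\LL$ staying in $\SSS$), there is a cover relation in $\LL$ between some $D\in\SSS^+$ and some $D'\in\SSS^-$; say $D$ covers $D'$. By Proposition~\ref{prop:cover} applied with $\leq^*=\leq$, the set $D\setminus D'$ is a $\leq$-equivalence class, hence a single element, say $\{k\}$. But $i\in D\setminus D'$ forces $k=i$, while $j\in D$ and $j\notin D'$ force $j\in D\setminus D'=\{i\}$, contradicting $i\ne j$. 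This proves part (2).

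The argument is essentially a repackaging of Theorem~\ref{thm:DL generated}, so I do not anticipate a deep obstacle; the one point demanding care is part (2), where one must exploit the ``pointed'' convention — that downsets are required to contain $\hat 0_P$ and exclude $\hat 1_P$ — precisely to rule out the two exceptional classes $E_0$ and $E_1$ and nothing more, and then combine this with the fact that a cover in $\DD(P,\leq)$ adds exactly one element. (Alternatively, part (2) could be obtained from Corollary~\ref{cor:closure 1} by taking $A=C_\LL[\SSS]$, for which $V(A)=\SSS$ when $\SSS$ is connected, so that $\langle\SSS\rangle_\LL=\langle V(A)\rangle_\LL$ is cover-preserving; but the direct route via Theorem~\ref{thm:DL generated} keeps the section self-contained.)
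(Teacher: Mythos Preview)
Your proof is correct. Part (1) matches the paper's argument essentially line for line: show $\SSS\subseteq\DD(P,\leq^{\SSS})$ to get one inclusion, then use $\langle\SSS\rangle_\LL=\DD(P,\leq^{\langle\SSS\rangle})$ and the monotonicity $\SSS\subseteq\langle\SSS\rangle_\LL\Rightarrow{\leq^{\langle\SSS\rangle}}\subseteq{\leq^{\SSS}}\Rightarrow\DD(P,\leq^{\SSS})\subseteq\DD(P,\leq^{\langle\SSS\rangle})$ for the other.

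For part (2) the paper takes exactly the alternative you mention at the end: it simply invokes Corollary~\ref{cor:closure 1} (applied to $A=C_\LL[\SSS]$) and says nothing further. Your direct argument via the cover-preservation criterion of Theorem~\ref{thm:DL generated} is a genuinely different route. It is self-contained within the distributive-lattice section and does not appeal to the modular-lattice machinery of Section~\ref{sec:DCS in ML}; the trade-off is that it is longer, whereas the paper's one-line citation leverages work already done. Your argument also makes explicit \emph{why} the pointed convention matters here (the classes $E_0,E_1$ are precisely those for which the partition $\SSS=\SSS^+\cup\SSS^-$ degenerates), which the paper's proof-by-citation leaves implicit.
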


\begin{proof}
For the first part,
let $\KK$ be the sublattice generated by $\SSS$; we claim $\KK = \DD(P,\leq^{\SSS})$.
Since $\SSS$ is a subset of the sublattice  $\DD(P,\leq^{\SSS})$, then $\KK \subseteq \DD(P,\leq^{\SSS}~)$.
Also, Theorem ~\ref{thm:DL generated} implies that
$\KK=\DD(P,\leq^{\KK})$, and therefore 
$\SSS \subseteq \KK$ implies $\DD(P,\leq^{\SSS}) \subseteq \DD(P,\leq^{\KK})=\KK$.

The second part is an immediate consequence of Corollary ~\ref{cor:closure 1}.
\end{proof}

We can now describe the
diamond-closure of a connected subset $A$ of arcs in the cover graph of a distributive lattice.

\begin{thm}
\label{thm:DL connected DC}
Let $\LL=\mathcal{D}(P,\leq)$ and let $A$ be a connected subset of the
cover graph $C_{\LL}$.  Let $\LL(A)$ be the set of members of $\LL$ 
belonging to at least one arc of $A$.
Then $\bigdiamond(A)$ is equal to the cover graph 
of the sublattice $\mathcal{D}(P,\leq^{\LL(A)})$.  In particular,
$\bigdiamond(A)$ is the entire cover graph of $\LL$ if and only if for every $i,j \in P$
for which $j$ is not $\leq i$,
there is a  $D \in \LL(A)$ that contains $i$ and not $j$.
\end{thm}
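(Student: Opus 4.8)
The plan is to read this off from the modular-lattice results already proved, principally Corollary~\ref{cor:closure 1}, together with the Birkhoff-side dictionary of Theorem~\ref{thm:DL generated} and Corollary~\ref{cor:DL connected closure}. Since $\LL=\DD(P,\leq)$ is distributive it is in particular a finite modular lattice, so Corollary~\ref{cor:closure 1} applies directly to the connected arc set $A\subseteq C_{\LL}$: writing $K=\langle\LL(A)\rangle_{\LL}$ for the sublattice generated by the vertices spanned by $A$, it gives that $K$ is cover-preserving and that $\bigdiamond(A)=C_{\LL}[K]$. By the definition of cover-preserving, $C_{\LL}[K]$ is exactly the cover set $C_K$ of the lattice $K$, so $\bigdiamond(A)$ is the cover graph of $K$, and it only remains to identify $K$.

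That identification is Corollary~\ref{cor:DL connected closure}(1) with $\SSS=\LL(A)$: the sublattice generated by $\LL(A)$ is $\DD(P,\leq^{\LL(A)})$, so $K=\DD(P,\leq^{\LL(A)})$. Substituting yields $\bigdiamond(A)=C_{\DD(P,\leq^{\LL(A)})}$, i.e.\ $\bigdiamond(A)$ is the cover graph of $\DD(P,\leq^{\LL(A)})$, which is the first assertion of the theorem.

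For the ``in particular'' clause I would argue as follows. By the injectivity of the correspondence $K\mapsto C_K$ between cover-preserving sublattices and connected diamond-closed sets (Corollary~\ref{cor:1 to 1}(1)), $\bigdiamond(A)=C_{\DD(P,\leq^{\LL(A)})}$ equals $C_{\LL}$ if and only if $\DD(P,\leq^{\LL(A)})=\LL$. By the bijection between sublattices of $\LL$ and extensions of $(P,\leq)$ in Theorem~\ref{thm:DL generated}, and since $\LL=\DD(P,\leq)$, this happens if and only if $\leq^{\LL(A)}$ coincides with $\leq$. As $\leq^{\LL(A)}$ is always an extension of $\leq$, equality means that $\leq^{\LL(A)}$ introduces no new relations, i.e.\ $j\leq^{\LL(A)}i$ implies $j\leq i$ for all $i,j\in P$. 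Finally, unwinding the definition of $\leq^{\LL(A)}$, the failure of $j\leq^{\LL(A)}i$ is precisely the existence of a $D\in\LL(A)$ with $i\in D$ and $j\notin D$; hence $\leq^{\LL(A)}=\leq$ is equivalent to the stated condition that every pair $i,j$ with $j$ not $\leq i$ is separated by some $D\in\LL(A)$ containing $i$ and not $j$.

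I do not anticipate any genuine obstacle: every ingredient is already available, and the work is purely bookkeeping --- matching the notation $\LL(A)$ with the $L(A)$ of Corollary~\ref{cor:closure 1}, using cover-preservingness of $K$ to pass from the induced arc set $C_{\LL}[K]$ to the intrinsic cover set $C_K$, and making sure that ``$D$ contains $i$ and not $j$'' is the negation of $j\leq^{\LL(A)}i$ (rather than of $i\leq^{\LL(A)}j$), so that the indices in the final condition appear in the order stated. The only spot where a reader might want an extra line is the reduction ``$\DD(P,\leq^{*})=\LL$ iff $\leq^{*}=\leq$'' in the third paragraph, which I would spell out from the injectivity half of Theorem~\ref{thm:DL generated}.
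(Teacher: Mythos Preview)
Your argument is correct and follows essentially the same route as the paper: the paper cites Theorem~\ref{thm:MLDC} where you cite Corollary~\ref{cor:closure 1}, but for a connected $A$ these yield the same conclusion $\bigdiamond(A)=C_{\LL}[\langle\LL(A)\rangle_{\LL}]$, and both then invoke Corollary~\ref{cor:DL connected closure} to identify the generated sublattice with $\DD(P,\leq^{\LL(A)})$. Your treatment of the ``in particular'' clause is more explicit than the paper's (which simply asserts that $\bigdiamond(A)=C_{\LL}$ iff $\leq^{\LL(A)}$ coincides with $\leq$), but the underlying reasoning is the same.
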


\begin{proof}
By Theorem ~\ref{thm:MLDC}, $\bigdiamond(A)$ is equal to the cover graph $C_{\KK}$,
where $\KK$ is the sublattice $\langle \LL(A) \rangle_{\LL}$. By Corollary ~\ref{cor:DL connected closure}
 $\KK=\mathcal{D}(P,\leq^{\LL(A)})$.

Now $\bigdiamond(A)=C_{\LL}$ if and only if the $\leq^{\LL(A)}$ order coincides with $\leq$
which means that for all $i,j$, if $j \not \leq i$, there is a  $D \in \LL(A)$ that contains $i$
and not $j$.
\end{proof}

\begin{example} Suppose $\LL$ is the lattice of all subsets of $[n]=\{1,2,\dots,n\}$. 
Each of the following two subsets of arcs have diamond-closure equal to the entire cover graph of $\LL$:
\begin{itemize}
\item
The set of arcs $\{\{i\} \rightarrow \emptyset:i \in [n]\}$.
\item 
The set of arcs $\{\{1,\ldots,i+1\} \rightarrow \{1,\ldots,i\}:1 \leq i \leq n-1\} \cup \{\{n-i-1,\ldots,n\} \rightarrow \{n-i,\ldots,n\}:0 \leq i \leq n-2\}$
\end{itemize}
\end{example}

For disconnected subsets, we can specialize the general procedure for
diamond-closure for modular lattices in Section ~\ref{subsec:ML diamond-closure}  
to distributive lattices.  As above, let $\LL=\DD(P,\leq)$
be a distributive lattice.  
The two main tasks required to carry out this procedure
are (1) Determining the sublattice generated by a connected subset of elements, and
(2) Determining whether two sublattices have nonempty intersection. 
By Theorem ~\ref{thm:DL generated}, the sublattice generated by $\YY$ is $\mathcal{D}(P,\leq^{\YY})$, which
implements (1).  To carry out (2), we make the following definitions: If $(P,\leq^1)$ and $(P,\leq^2)$
are quasi-orders on the same set $P$ then $(P,\leq^1 \bar{\cup} \leq^2)$ is the transitive relation
obtained by taking the transitive closure of the union of the relations.   We say
that $(P,\leq^1)$ and $(P,\leq^2)$ are {\em compatible} if
in $(P,\leq^1 \bar{\cup} \leq^2)$, $\hat{0}_P$ and $\hat{1}_P$ are in different
equivalence classes.  

\begin{prop}
\label{prop:intersection}
Let $\LL=\DD(P,\leq)$ be a distributive lattice and let $(P,\leq^1)$
and $(P,\leq^2)$ be extensions.   Then 
$\DD(P,\leq^1) \cap \DD(P,\leq^2)$  is nonempty if and only
$(P,\leq^1)$ and $(P,\leq^2)$ are compatible.
\end{prop}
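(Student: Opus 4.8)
The plan is to reduce the intersection of the two downset lattices to the downset lattice of the combined relation, and then to characterize exactly when such a lattice is nonempty.

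First I would show that
$$\DD(P,\leq^1) \cap \DD(P,\leq^2) = \DD(P,\leq^1 \bar{\cup} \leq^2).$$
For the inclusion $\supseteq$, note that $\leq^i$ is contained in $\leq^1\bar{\cup}\leq^2$ for $i=1,2$, so any subset that is downward closed under $\leq^1\bar{\cup}\leq^2$ is downward closed under each $\leq^i$; and the pointed conditions ($\hat{0}_P \in D$ and $\hat{1}_P \notin D$) refer only to $\hat{0}_P$ and $\hat{1}_P$ and so do not depend on which of the three relations is being used. For the inclusion $\subseteq$, suppose $D$ is a downset for both $\leq^1$ and $\leq^2$, and suppose $x \mathrel{(\leq^1\bar{\cup}\leq^2)} y$ with $y \in D$. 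By definition of the transitive closure there is a finite chain $x = z_0, z_1, \ldots, z_k = y$ in which consecutive terms are related by $\leq^1$ or by $\leq^2$; descending from $z_k = y \in D$ and using at each step that $D$ is a downset for the relevant relation, an induction gives $z_0 = x \in D$. Hence $D$ is a downset for $\leq^1\bar{\cup}\leq^2$, which together with the common pointed conditions proves the claim.

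Next I would establish the elementary fact that for any reflexive transitive relation $\preceq$ on $P$ satisfying $\hat{0}_P \preceq x \preceq \hat{1}_P$ for all $x$, the family of downsets is nonempty if and only if $\hat{0}_P$ and $\hat{1}_P$ lie in different $\preceq$-equivalence classes. If they lie in the same class, then every downset $D$ contains $\hat{0}_P$ and, being downward closed, contains every element $\preceq \hat{0}_P$; since in this case $\hat{1}_P \preceq \hat{0}_P$, we get $\hat{1}_P \in D$, contradicting condition (3), so there is no downset. Conversely, if they lie in different classes, then $E_0 = \{x \in P : x \preceq \hat{0}_P\}$ is downward closed, contains $\hat{0}_P$, and does not contain $\hat{1}_P$ (otherwise $\hat{1}_P \preceq \hat{0}_P$, and since always $\hat{0}_P \preceq \hat{1}_P$ this would put them in the same class), so $E_0$ is a downset.

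Combining the two steps, $\DD(P,\leq^1)\cap\DD(P,\leq^2) = \DD(P,\leq^1\bar{\cup}\leq^2)$ is nonempty if and only if $\hat{0}_P$ and $\hat{1}_P$ lie in different equivalence classes of $\leq^1 \bar{\cup} \leq^2$, which is precisely the definition of $(P,\leq^1)$ and $(P,\leq^2)$ being compatible. I do not expect a real obstacle here; the only point needing a little care is the chain argument for the $\subseteq$ inclusion, together with the bookkeeping that the pointed conditions (2) and (3) hold simultaneously for all three relations since they involve only $\hat{0}_P$ and $\hat{1}_P$.
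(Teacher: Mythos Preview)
Your proof is correct and follows essentially the same approach as the paper: the paper also uses the witness $D=\{j\in P:j\leq^3\hat{0}_P\}$ (your $E_0$) for one direction, and for the converse observes that a set $C$ closed downward under both $\leq^1$ and $\leq^2$ is closed downward under their transitive closure, forcing $\hat{0}_P$ and $\hat{1}_P$ into different classes. The only organizational difference is that you factor the argument through the intermediate identity $\DD(P,\leq^1)\cap\DD(P,\leq^2)=\DD(P,\leq^1\bar{\cup}\leq^2)$, whereas the paper argues each direction directly; the content is the same.
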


\begin{proof} 
Let $\leq^3$ denote the transitive closure of $\leq^1 \cup \leq^2$.
Let $D=\{j \in P:j \leq^3 \hat{0}_P\}$.  If $\leq^1$ and $\leq^2$ are
compatible then $\hat{1}_P \not\in D$, and so
then  $D \in \mathcal{D}(P,\leq^1) \cap \mathcal{D}(P,\leq^2)$. 
Conversely, suppose 
$C \in \mathcal{D}(P,\leq^1) \cap \mathcal{D}(P,\leq^2)$. Then
for all $i\in P-C$ and $j \in C$ we have neither $i \leq^1 j$ nor $i \leq ^2 j$,
and thus the same property holds for $\leq^3$.  Therefore $\hat{1}_P$ and
$\hat{0}_P$ are in different equivalence classes with respect to $\leq^3$.
\end{proof}

So if $(P,\leq^1)$ and $(P,\leq^2)$ are compatible, then
$\mathcal{D}(P,\leq^1) \cup \mathcal{D}(P,\leq^2)$ is connected.
The lattice spanned by this union corresponds to $\mathcal{D}(P,\leq^{\YY})$
where $\YY=\DD(P,\leq^1~) \cup \DD(P,\leq^2)$.  It is easy to verify that 
$i \leq^{\YY} j$ if and only if $i \leq^1 j$ and $i \leq^2 j$,
so that $\leq^{\YY} = \leq^1 \cap \leq^2$. 
Using this, we obtain the following procedure for obtaining the diamond-closure
of an arbitrary subset $A$ of the arcs of $\mathcal{D}(P,\leq)$.

\begin{enumerate}
\item Let $\{A_1,\ldots,A_k\}$ be the connected components of $A$.
\item For $i \in \{1,\ldots,k\}$ let $\YY_i=\LL(A_i)$, and let $(P,\leq^i)$
be the extension $(P,\leq^{\YY_i})$.   Let $\mathcal{P}$ be the
set consisting of all of the quasi-orderings $\{\leq^i: i \in \{1,\ldots,k\}\}$.
\item  While $\mathcal{P}$ contains a pair $\leq^*$ and $\leq^{\circ}$ $\mathcal{P}$ of compatible
orders replace them in $\mathcal{P}$ by
$\leq^* \cap \leq^{\circ}$.
\end{enumerate}

\begin{thm}
\label{thm:DLDC}
For any finite distibutive lattice $\LL=\DD(P,\leq)$ and $A \subseteq C_{\LL}$, the above procedure produces a collection $\mathcal{P}$ of  quasi-orders on $P$, and
$\bigdiamond(A)$ is equal to  union of $C_{\DD(P,\leq^*)}$ over quasi-orders
in $\mathcal{P}$.
In particular $\bigdiamond(A)=C_{\LL}$ if and only if the procedure
ends with $\mathcal{P}$ having $\leq$ as its only member.
\end{thm}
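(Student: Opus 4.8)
The plan is to show that the procedure maintains the invariant $\bigdiamond(A)=\bigcup_{\leq^*\in\mathcal{P}}C_{\DD(P,\leq^*)}$, closely paralleling the proof of Theorem~\ref{thm:MLDC}, and then extract the ``in particular'' clause from Corollary~\ref{cor:1 to 1}. First I would observe that the procedure terminates: $\mathcal{P}$ is finite at the start and shrinks by one element each time the while loop executes, so only finitely many iterations occur. Let $\mathcal{P}^0$ be the initial value and $\mathcal{P}^*$ the final value.

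Next I would establish the invariant by induction on the number of iterations. For the base case, by definition $\{A_1,\dots,A_k\}$ are the connected components of $A$, so $\YY_i=\LL(A_i)$ is exactly the set of lattice elements meeting $A_i$; by Corollary~\ref{cor:DL connected closure}(1) the sublattice generated by $\YY_i$ is $\DD(P,\leq^{\YY_i})=\DD(P,\leq^i)$, and by Corollary~\ref{cor:closure 1} we have $\bigdiamond(A_i)=C_{\DD(P,\leq^i)}$. Since the $A_i$ are the arc-components of $A$, taking the union gives $\bigdiamond(A)=\bigcup_i C_{\DD(P,\leq^i)}=\bigcup_{\leq^*\in\mathcal{P}^0}C_{\DD(P,\leq^*)}$. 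For the inductive step, suppose the invariant holds before an iteration that replaces compatible orders $\leq^*$ and $\leq^{\circ}$ by $\leq^*\cap\leq^{\circ}$. By Proposition~\ref{prop:intersection}, compatibility means $\DD(P,\leq^*)\cap\DD(P,\leq^{\circ})\neq\emptyset$, so $C_{\DD(P,\leq^*)}\cup C_{\DD(P,\leq^{\circ})}$ is connected. Here I would invoke the computation preceding the statement: for $\YY=\DD(P,\leq^*)\cup\DD(P,\leq^{\circ})$ one has $\leq^{\YY}=\leq^*\cap\leq^{\circ}$, and hence by Corollary~\ref{cor:DL connected closure}(1) (applied with $\SSS=\YY$, noting $\YY$ is connected) the sublattice generated by $\YY$ is $\DD(P,\leq^*\cap\leq^{\circ})$, whose cover graph equals $\bigdiamond(C_{\DD(P,\leq^*)}\cup C_{\DD(P,\leq^{\circ})})$. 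Since $C_{\DD(P,\leq^*)}\cup C_{\DD(P,\leq^{\circ})}\subseteq\bigdiamond(A)$ by the inductive hypothesis and $\bigdiamond(A)$ is diamond-closed, we get $C_{\DD(P,\leq^*\cap\leq^{\circ})}\subseteq\bigdiamond(A)$; conversely replacing the two orders by their meet only enlarges (or keeps fixed) each individual cover graph $C_{\DD(P,\leq^*)}$ while the other members are unchanged, and the old union was contained in $\bigdiamond(A)$, so the new union is contained in $\bigdiamond(A)$, and it still contains $A$. This gives $\bigdiamond(\bigcup_{\leq^\bullet\in\mathcal{P}}C_{\DD(P,\leq^\bullet)})=\bigdiamond(A)$ after the iteration, completing the induction.

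Finally, at termination no two members of $\mathcal{P}^*$ are compatible, so by Proposition~\ref{prop:intersection} the sublattices $\{\DD(P,\leq^*):\leq^*\in\mathcal{P}^*\}$ are pairwise disjoint, i.e.\ they form a sublattice packing; by Corollary~\ref{cor:DL connected closure}(2) each is cover-preserving (the $\leq^*$ arise as $\leq^{\YY}$ for connected $\YY$, a property preserved under the meet operation since each intermediate $\YY$ is connected), so $\mathfrak{K}^*=\{\DD(P,\leq^*):\leq^*\in\mathcal{P}^*\}$ is a CS-packing with $C_{\mathfrak{K}^*}=\bigcup_{\leq^*\in\mathcal{P}^*}C_{\DD(P,\leq^*)}$ and $\bigdiamond(C_{\mathfrak{K}^*})=\bigdiamond(A)$; by Corollary~\ref{cor:1 to 1}(2) this forces $C_{\mathfrak{K}^*}=\bigdiamond(A)$. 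For the ``in particular'' clause: $\bigdiamond(A)=C_{\LL}$ holds iff the unique CS-packing $\mfK(A)$ equals $\{\LL\}$, which by the correspondence of Theorem~\ref{thm:DL generated} and the disjointness at termination happens exactly when $\mathcal{P}^*=\{\leq\}$ (note $\DD(P,\leq^{\emptyset\text{-generator}})=\LL$ corresponds to $\leq$ itself, and any proper extension gives a proper sublattice, while having two or more members would mean $C_{\mathfrak{K}^*}$ is disconnected, hence $\neq C_{\LL}$ which is connected).

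The main obstacle I anticipate is bookkeeping around the ``and it still contains $A$'' direction of the invariant: one must check that after merging, the union of cover graphs over $\mathcal{P}$ still contains $A$ and, more importantly, that $\bigdiamond$ of this union does not overshoot $\bigdiamond(A)$. This is handled by the monotonicity argument above (each merge replaces a piece by a diamond-closed superset still inside $\bigdiamond(A)$), but one should be careful that the intermediate unions $C_{\DD(P,\leq^*)}\cup C_{\DD(P,\leq^{\circ})}$ are genuinely connected so that Corollary~\ref{cor:closure 1}/Corollary~\ref{cor:DL connected closure} apply as stated to a \emph{connected} set. A secondary subtlety is verifying that the meet $\leq^*\cap\leq^{\circ}$ of two extensions-of-the-form-$\leq^{\YY}$ is again of that form for a connected $\YY$; this follows because $\YY=\DD(P,\leq^*)\cup\DD(P,\leq^{\circ})$ is connected whenever the two sublattices intersect, and a direct check gives $\leq^{\YY}=\leq^*\cap\leq^{\circ}$.
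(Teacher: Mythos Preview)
Your approach is correct and matches the paper's intent: Theorem~\ref{thm:DLDC} is stated without a separate proof because it is simply Theorem~\ref{thm:MLDC} translated through the dictionary of Theorem~\ref{thm:DL generated}, Corollary~\ref{cor:DL connected closure}, and Proposition~\ref{prop:intersection}, and you carry out exactly this translation by re-running the MLDC argument in the distributive setting.

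One small slip to fix: in your base case you write $\bigdiamond(A)=\bigcup_i C_{\DD(P,\leq^i)}$, but this equality is generally false at the start (the right-hand side need not be diamond-closed if two of the sublattices already intersect). The invariant you actually maintain, and correctly state at the end of the inductive step, is $\bigdiamond\!\bigl(\bigcup_{\leq^*\in\mathcal{P}}C_{\DD(P,\leq^*)}\bigr)=\bigdiamond(A)$; replace the base-case display accordingly. Likewise, in the inductive step your ``conversely'' sentence should simply say that the new union contains the old one (since $\DD(P,\leq^*\cap\leq^{\circ})\supseteq\DD(P,\leq^*)\cup\DD(P,\leq^{\circ})$) and is contained in $\bigdiamond(A)$, which is all you need.
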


\section*{Acknowledgements.} We thank an anonymous referee for carefully reading the paper and for identifying technical issues that have been corrected for this final version.

\end{document}